\begin{document}

\def\fl#1{\left\lfloor#1\right\rfloor}
\def\cl#1{\left\lceil#1\right\rceil}
\def\ang#1{\left\langle#1\right\rangle}
\def\stf#1#2{\left[#1\atop#2\right]} 
\def\sts#1#2{\left\{#1\atop#2\right\}}
\def\eul#1#2{\left\langle#1\atop#2\right\rangle}
\def\N{\mathbb N}
\def\Z{\mathbb Z}
\def\R{\mathbb R}
\def\C{\mathbb C}
\newcommand{\ctext}[1]{\raise0.2ex\hbox{\textcircled{\scriptsize{#1}}}}

\newtheorem{theorem}{Theorem}
\newtheorem{Prop}{Proposition}
\newtheorem{Cor}{Corollary}
\newtheorem{Lem}{Lemma}

\newenvironment{Rem}{\begin{trivlist} \item[\hskip \labelsep{\it
Remark.}]\setlength{\parindent}{0pt}}{\end{trivlist}}

\title{The Frobenius number for shifted geometric sequences associated with the number of solutions 
}

\author{
Takao Komatsu
\\
\small Department of Mathematical Sciences, School of Science\\[-0.8ex]
\small Zhejiang Sci-Tech University\\[-0.8ex]
\small Hangzhou 310018 China\\[-0.8ex]
\small \texttt{komatsu@zstu.edu.cn}
}

\date{
\small MR Subject Classifications: Primary 11D07; Secondary 05A15, 05A17, 05A19, 11B68, 11D04, 11P81 
}

\maketitle
 
\begin{abstract} 
For a non-negative integer $p$, one of the generalized Frobenius numbers, that is called the $p$-Frobenius number, is the largest integer that is represented at most in $p$ ways as a linear combination with nonnegative integer coefficients of a given set of positive integers whose greatest common divisor is one. The famous so-called Frobenius number proposed by Frobenius is reduced to the $0$-Frobenius number when $p=0$. 
The explicit formula for the Frobenius number with two variables was found in the 19th century, but a formula with more than two variables is very difficult to find, and closed formulas of Frobenius numbers have been found only in special cases such as geometric, Thabit, Mersenne, and so on. The case of $p>0$ was even more difficult, and not a single formula was known. 
However, most recently, we have finally succeeded in giving the $p$-Frobenius numbers as closed-form expressions of the triangular number triplet (\cite{Ko22a}), repunits (\cite{Ko22b}), Fibonacci triplet (\cite{Ko23b}) and Jacobsthal triplet (\cite{KP}).

In this paper, 
we give closed-form expressions of the $p$-Frobenius number for the finite sequence $\{a b^n-c\}_n$, where $a$, $b$ and $c$ are integers with $a\ge 1$, $b\ge 2$ and $c\ne 0$. This sequence includes the cases for geometric, Thabit and Mersenne as well as their variations.
\\
{\bf Keywords:} Frobenius problem, Frobenius numbers, geometric sequence, Ap\'ery set, Thabit, Mersenne       
\end{abstract}

\section{Introduction}  

For given positive integers $a_1,\dots,a_k$ with $\gcd(a_1,\dots,a_k)=1$, the largest positive integer with no nonnegative integer representation by $a_1,\dots,a_k$ is called the {\it Frobenius number} and denoted by $g=g(a_1,\dots,a_k)$. In numerical semigroups, $g+1$ is called the {\it conductor}. Such a problem is also known as the coin exchange problem. Many methods have been used in intending to find a formula for the Frobenius number. The Frobenius problem is easy to solve when $n=2$. However, the computation of an explicit formula when $n=3$ is much more difficult and has been studied by many researchers for a long time. 

Another interesting matter is the number of nonnegative integer representations of a given nonnegative integer $m$ by $a_1,\dots,a_k$, which is called {\it denumerant}. The notion of a denumerant is an important generalization of the notion of a number of partitions (\cite{Caley}). In \cite{ko03}, a general form that is well computable practically to find the denumerant is given. This has been improved by Binner \cite{bi21} recently.  

Therefore, in a natural generalization, for a nonnegative integer $p$, consider the set $S_p$ of all nonnegative integers whose numbers of representations with nonnegative integers in terms of $a_1,\dots,a_k$ are more than $p$. When $\gcd(a_1,\dots,a_k)=1$, there exists the largest integer in $\mathbb N_0\backslash S_p$. Namely, the largest positive integer whose number of representations with nonnegative integers in terms of $a_1,\dots,a_k$ is at most $p$. That is, all integers larger than this number can be represented by $a_1,\dots,a_k$ in $p+1$ or more ways. Such a general Frobenius number may be called the $p$-Frobenius number, and denoted by $g_p(a_1,\dots,a_k)$. So, when $p=0$, $g(a_1,\dots,a_k)=g_0(a_1,\dots,a_k)$ is the original Frobenius number.  

In \cite{op08,tr08}, the Frobenius number for geometric sequences of the form $a^k,a^{k-1}b,\dots,a b^{k-1},b^k$ has been determined. Note that the familiar geometric sequence of the form $a,a r,a r^2,\dots,a r^{k-1}$ does not satisfy the condition $\gcd(A)=1$ if $a$ and $r$ are positive integers with $a,r\ge 2$. In this paper, we are interested in another way to satisfy this condition by shifting by an integer $c$.  

Let $S=\ang{a_1,\dots,a_k}$ be the set of all nonnegative integral linear combinations of $a_1,\dots,a_k$. Then $S$ is a numerical semigroup. In this sense, the set $S_p$ is called the {\it $p$-numerical semigroup} with $S=S_0$.   
In \cite{Song20}\footnote{Example 5.14 (4) is wrong.}, Thabit numerical semigroups of the first kind in base $b$ and of the second kind in base $b$, defined as $\ang{\{b^{n+i}(b+1)-1|i\in\mathbb N_0\}}$ and $\ang{\{b^{n+i}(b+1)+1|i\in\mathbb N_0\}}$, respectively, and Cunningham numerical semigroups defined as $\ang{\{b^{n+i}+1|i\in\mathbb N_0\}}$ are studied. Here, $\mathbb N_0$ denotes the set of nonnegative integers. When $b=2$ in Cunningham, the Frobenius number is obtained in \cite{Shallit21}. 
In \cite{Song21}, generalized Thabit numerical semigroups, defined as $\ang{\{2^{n+i}(2^k+1)-(2^k-1)|i\in\mathbb N_0\}}$, are studied to obtain the Frobenius number and so on. When $k=1$, Mersenne numerical semigroups are studied in \cite{RBT17}. When $b=2$ in Thabit numerical semigroups of the first kind or $k=1$ in generalized Thabit, Thabit numerical semigroups are studied in \cite{RBT15}. Another similar type is on repunit numerical semigroups $\ang{\{(b^{n+i}-1)/(b-1)|i\in\mathbb N_0\}}$ (\cite{RBT16}) and more general types are considered in \cite{AB}. 

In this paper, for positive integers $a$, $b$ and $c$ with $b\ge 2$, we consider a more general type of semigroup $\ang{\{a b^{n+i}-c|i\in\mathbb N_0\}}$. When $p=0$, our concern is that there are representations or are not representations, that is, the number of representations is either $0$, or $1$ or more, so it has no effect whether the sequence is infinite or finite. However, in this paper, we are interested in whether the number of representations is less than or greater than $p$ for the general nonnegative integer $p$. Therefore, the situation is different when the sequence is infinite and when it is finite. Hence, in this paper, consider only the case of a {\it finite} sequence\footnote{Since the considered sequence is infinite in \cite{AB,RBT15,RBT16,RBT17,Shallit21,Song20,Song21}, the result can be different even when $p=0$.} sequence starting with $a b^{n}-c, a b^{n+1}-c, \dots$.     
For two variables, the results are easy to know. For general positive integers $a$ and $b$ exceeding $1$ with $\gcd(a,b)=1$, 
$$
g_p(a,b)=(p+1)a b-a-b 
$$ 
(see, e.g., \cite{bk11,Ko-p}). However, when there are three or more variables, it is sufficiently difficult to obtain the explicit formula of $g_p$ or $n_p$ even in a special case when $p=0$, and it is even more difficult when $p>0$, and in the case of a special triplet even one example had not been found. However, very recently, in \cite{Ko22a}, \cite{Ko22b}, \cite{Ko23b} and \cite{KP}, for $p\ge 0$ we finally succeeded in obtaining explicit formulas in the sequences of triangular numbers, repunits and Fibonacci numbers, respectively.  
Therefore, our main concern is in the case of three variables, that is  
$$
\{a b^{n}-c,~ a b^{n+1}-c,~ a b^{n+2}-c\}
$$ 
with $\gcd(a b^{n}-c,~ a b^{n+1}-c,~ a b^{n+2}-c)=1$, 
which is the main topic in this paper.  

The nonnegative integers not in $S$ are called the gaps of $S$. We shall also see the number of gaps, which is called the {\it genus} in numerical semigroup or the {\it Sylvester number}. The existence or composition of one-point algebraic curves for numerical semigroups when the genus is small is one of the most popular and interesting topics (see, e.g., \cite{KaKo}) We also find the generalized Sylvester number or the $p$-Sylvester number for $S=\{a b^{3}-c, a b^{4}-c, a b^{5}-c\}$.

\section{Ap\'ery set}  

We introduce the Ap\'ery set \cite{Apery} to obtain the formulas in this paper. 
 
Let $p$ be a nonnegative integer. For a set of positive integers $A=\{a_1,a_2,\dots,a_k\}$ with $\gcd(A)=1$ and $a_1=\min(A)$ we denote by 
$$
{\rm Ap}_p(A)={\rm Ap}(a_1,a_2,\dots,a_k)=\{m_0^{(p)},m_1^{(p)},\dots,m_{a_1-1}^{(p)}\}\,, 
$$ 
the Ap\'ery set of $A$, where each positive integer $m_i^{(p)}$ $(0\le i\le a_1-1)$ satisfies the conditions:
$$
{\rm (i)}\, m_i^{(p)}\equiv i\pmod{a_1},\quad{\rm (ii)}\, m_i^{(p)}\in S_p(A),\quad{\rm (iii)}\, m_i^{(p)}-a_1\not\in S_p(A)
$$ 
Note that $m_0$ is defined to be $0$.

It follows that for each $p$, 
$$
{\rm Ap}_p(A)\equiv\{0,1,\dots,a_1-1\}\pmod{a_1}\,. 
$$  

One of the convenient formulas to obtain the $p$-Frobenius number is via the elements in the corresponding Ap\'ery set (\cite{Ko-p}).  

\begin{Lem}  
Let $\gcd(a_1,\dots,a_k)=1$ with $a_1=\min\{a_1,\dots,a_k\}$. Then we have 
$$
g_p(a_1,\dots,a_k)=\max_{0\le j\le a_1-1}m_j^{(p)}-a_1\,. 
$$ 
\label{lem:p-frob}
\end{Lem}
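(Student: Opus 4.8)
The plan is to characterize the $p$-Frobenius number as the largest integer $n$ with fewer than $p+1$ representations, and to connect this to the Ap\'ery set ${\rm Ape}_p(A)=\{m_0^{(p)},\dots,m_{a_1-1}^{(p)}\}$ whose elements are, by definition, the least members of ${\rm R}_{p+1}(A)$ in each residue class modulo $a_1$ (here ${\rm R}_{p+1}(A)$ denotes the set of integers having at least $p+1$ representations by $A$). The key structural fact I would establish first is that ${\rm R}_{p+1}(A)$ is closed under adding $a_1$: if $n$ has at least $p+1$ representations, then so does $n+a_1$, since adjoining one more copy of $a_1$ to each representation of $n$ yields $p+1$ distinct representations of $n+a_1$. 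Consequently, in each residue class $j$ modulo $a_1$, the set ${\rm R}_{p+1}(A)$ restricted to that class is exactly $\{m_j^{(p)},m_j^{(p)}+a_1,m_j^{(p)}+2a_1,\dots\}$, an arithmetic progression starting at $m_j^{(p)}$.

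Granting that, the complement — the integers with at most $p$ representations — intersected with residue class $j$ is the finite set $\{j', j'+a_1,\dots\}$ of nonnegative integers in that class that are strictly below $m_j^{(p)}$, the largest of which is $m_j^{(p)}-a_1$. (For $j=0$ one uses $m_0^{(p)}=0$, though in fact the overall maximum is attained at some $j\neq 0$ once $k\ge 2$; this edge case needs only a remark.) Taking the maximum over all residue classes $j=0,1,\dots,a_1-1$, the largest integer with at most $p$ representations is $\max_{0\le j\le a_1-1}\bigl(m_j^{(p)}-a_1\bigr)=\max_{0\le j\le a_1-1}m_j^{(p)}-a_1$, which is precisely $g_p(a_1,\dots,a_k)$.

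Concretely, the steps in order are: (1) recall the definitions of ${\rm R}_{p+1}(A)$, $g_p$, and ${\rm Ape}_p(A)$; (2) prove the closure lemma ${\rm R}_{p+1}(A)+a_1\subseteq{\rm R}_{p+1}(A)$ via the copy-of-$a_1$ injection on representations; (3) deduce that each residue class meets ${\rm R}_{p+1}(A)$ in the full tail $\{m_j^{(p)}+ta_1 : t\ge 0\}$, using minimality of $m_j^{(p)}$ for the reverse inclusion; (4) identify the largest non-member in class $j$ as $m_j^{(p)}-a_1$; (5) maximize over $j$ and observe the $-a_1$ factors out, and (6) note that the maximum is not attained at $j=0$ so the formula is consistent with $g_p$ being a genuine gap-type quantity.

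The main obstacle is step (3)'s reverse inclusion: one must argue that \emph{every} integer in class $j$ that lies at or above $m_j^{(p)}$ has at least $p+1$ representations. This does not follow from closure alone applied to $m_j^{(p)}$ — it gives $m_j^{(p)}+ta_1\in{\rm R}_{p+1}(A)$ directly, so in fact closure does suffice here; the subtlety is rather in being careful that "least positive element of ${\rm R}_{p+1}(A)$ in class $j$" is well-defined, i.e. that ${\rm R}_{p+1}(A)$ does meet every residue class (which holds because $\gcd(A)=1$ guarantees all sufficiently large integers have arbitrarily many representations, a standard fact I would invoke or prove in a line). Once that is in place, the rest is bookkeeping. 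I would also make explicit that $m_0^{(p)}=0$ contributes $-a_1$ to the max, which is harmless since the true maximum exceeds this whenever the semigroup is nontrivial.
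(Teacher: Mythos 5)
Your core argument --- ${\rm R}_{p+1}(A)$ is closed under adding $a_1$ (append one more copy of $a_1$ to each of $p+1$ representations), hence meets each residue class modulo $a_1$ in the full tail $\{m_j^{(p)}+ta_1:t\ge 0\}$ by minimality, so the largest integer with at most $p$ representations in class $j$ is $m_j^{(p)}-a_1$, and one maximizes over $j$ --- is the standard generalization of Brauer--Shockley. The paper itself offers no proof of this lemma (it cites \cite{Ko-p}, remarking that $p=0$ is due to Brauer and Shockley), and your route is the intended one; your point about well-definedness (every class meets ${\rm R}_{p+1}(A)$ because $\gcd(A)=1$) is also correctly placed.

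The genuine defect is your treatment of the class $j=0$. You adopt the convention $m_0^{(p)}=0$ and dispose of this class with the claim that ``the overall maximum is attained at some $j\neq 0$ once $k\ge 2$,'' calling it a harmless remark. That claim is true for $p=0$ (multiples of $a_1$ are representable, so $g_0\not\equiv 0\pmod{a_1}$), but it is false for $p\ge 1$. Take $A=\{4,5,6,7,9\}$ and $p=1$: the only representation of $8$ is $4+4$, while $9=9=4+5$, $10=5+5=4+6$, $11=4+7=5+6$, $12=4+4+4=5+7=6+6$, and these are the least elements of ${\rm R}_2(A)$ in the four residue classes modulo $4$, so by closure every integer $\ge 9$ has at least two representations; hence $g_1=8\equiv 0\pmod 4$. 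Since the least elements of ${\rm R}_2(A)$ in the nonzero classes are $9,10,11$, the quantity $\max_{j\ne 0}m_j^{(1)}-a_1$ equals $7\ne 8$: with $m_0^{(1)}:=0$ the displayed formula itself fails, and so does your step (6). The lemma is correct exactly when $m_0^{(p)}$ is defined, like the other $m_j^{(p)}$, as the least element of ${\rm R}_{p+1}(A)$ in class $0$ (this is $0$ only for $p=0$; for $p\ge 1$ it is a positive multiple of $a_1$, here $12$, giving $g_1=12-4=8$), which is also what the summation formula of Lemma \ref{lem:p-sylv} forces. With that definition your closure-plus-minimality argument applies verbatim and uniformly to all residue classes and no claim about where the maximum occurs is needed; so the repair is easy, but as written your step (6) is a false general assertion rather than an edge-case remark.
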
  

\noindent 
{\it Remark.}  
When $p=0$, it is essentially due to Brauer and Shockley \cite{bs62}.  
More general formulas, including the $p$-power sum and the $p$-weighted sum, can be seen in \cite{Ko-p}.

\section{Main result} 

For a given positive integer $n$, 
let $q$ and $r$ be nonnegative integers satisfying $a b^n-c=(b+1)q+r$ with $0\le r<b+1$. That is, 
$$
q=\fl{\frac{a b^n-c}{b+1}}\quad\hbox{and}\quad r=(a b^n-c)-(b+1)\fl{\frac{a b^n-c}{b+1}}\,. 
$$ 
For simplicity, put $A_{3,n}=(a b^n-c,a b^{n+1}-c, a b^{n+2}-c)$.  In this paper, we study the generalized Frobenius number ($p$-Frobenius number) for the triple $A_{3,n}$. In this section, we consider the case $c>0$. When $c<0$ and $r>1$, it is a little more difficult to decide the largest element in ${\rm Ap}_p(A_{3,n})$. We discuss the case $c<0$ in the later section.  

\begin{theorem} 
If $\gcd(A_{3,n})=1$, then for $0\le p\le q$ we have 
$$
g_p(A_{3,n})
=\left\{
\begin{alignedat}{2}
& (r-1)(a b^{n+1}-c)+(q+p)(a b^{n+2}-c)-(a b^n-c)\\
&\qquad\qquad\qquad\qquad\qquad\text{if $(b+1)\nmid(a b^n-c)$ and $c>0$},\\
& b(a b^{n+1}-c)+(q+p-1)(a b^{n+2}-c)-(a b^n-c)\\
&\qquad\qquad\qquad\qquad\qquad\text{if $(b+1)\mid(a b^n-c)$}\,.
\end{alignedat}
\right. 
$$ 
\label{th:p-frob} 
\end{theorem}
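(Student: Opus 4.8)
The plan is to determine the Ap\'ery set ${\rm Ape}_p(A_{3,n})$ completely and then apply Lemma~\ref{lem:p-frob}. Write $a_1=ab^n-c$, $a_2=ab^{n+1}-c$, $a_3=ab^{n+2}-c$; since $c>0$, $a_1=\min A_{3,n}$. Everything hinges on the single relation
$$
(b+1)a_2=ba_1+a_3,\qquad\text{i.e.}\qquad a_3=(b+1)a_2-ba_1,
$$
together with $a_2=ba_1+(b-1)c$ and $a_3=b^2a_1+(b^2-1)c$, whence $a_2\equiv(b-1)c$ and $a_3\equiv(b^2-1)c\pmod{a_1}$. Since $a_2-ba_1=(b-1)c$ and $a_3-b^2a_1=(b+1)(b-1)c$, one gets $\gcd(A_{3,n})=\gcd(a_1,(b-1)c)$, so the hypothesis forces $d:=(b-1)c$ to be a unit modulo $a_1$. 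For $0\le j\le a_1-1$ let $k_j\in\{0,\dots,a_1-1\}$ satisfy $k_jd\equiv j\pmod{a_1}$, and write $k_j=(b+1)s_j+r_j$ with $0\le r_j\le b$; then $0\le s_j\le q$, and $(s_j,r_j)$ ranges exactly over the pairs with $(b+1)s_j+r_j\le a_1-1=(b+1)q+(r-1)$.

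The first real step is a clean parametrisation of representations. Substituting $a_3=(b+1)a_2-ba_1$ into $N=x_1a_1+x_2a_2+x_3a_3$ gives $N=(x_1-bx_3)a_1+(x_2+(b+1)x_3)a_2$, so with $w:=x_2+(b+1)x_3$ the representations of an $N\equiv j$ are in bijection with the pairs $(w,x_3)$ satisfying $w\equiv k_j\pmod{a_1}$, $w\ge0$, $0\le x_3\le\fl{w/(b+1)}$ and $x_1:=(N-wa_2)/a_1+bx_3\ge0$. Writing $w=k_j+\ell a_1$ ($\ell\ge0$) groups the representations into blocks. Block $\ell=0$ gives at once $m_j^{(0)}=r_ja_2+s_ja_3$, and for $N=m_j^{(0)}+ta_1$ this block contributes exactly $\min(s_j,\fl{t/b})+1$ representations; block $\ell=1$ contributes nothing until $t\ge a_2-b(s_j'-s_j)$, where $s_j':=\fl{(k_j+a_1)/(b+1)}\in\{s_j+q,\,s_j+q+1\}$, and then one more per $b$ further steps of $t$. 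Because $p\le q\le s_j'$, blocks $\ell\ge2$ (active only near $t\approx2a_2$) never matter; moreover $a_2=ba_1+(b-1)c\ge ba_1\ge b(b+1)q\ge bs_j'$, so all $s_j+1$ representations of block $\ell=0$ are already present once block $\ell=1$ switches on.

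Reading off $m_j^{(p)}$ now splits into two regimes. If $s_j\ge p$, the count reaches $p+1$ inside block $\ell=0$ at $t=bp$, so $m_j^{(p)}=r_ja_2+s_ja_3+bpa_1$. If $s_j\le p-1$, block $\ell=0$ saturates at $s_j+1\le p$ representations before block $\ell=1$ begins, so the count hits $p+1$ precisely when block $\ell=1$ supplies its $(p-s_j)$-th representation, at $t=a_2-b(s_j'-p+s_j+1)$; simplifying with $(b+1)a_2=ba_1+a_3$,
$$
m_j^{(p)}=\begin{cases}(s_j+q)\,a_3+(r_j+r)\,a_2+b(p-s_j-1)\,a_1, & r_j+r\le b,\\ (s_j+q+1)\,a_3+(r_j+r-b-1)\,a_2+b(p-s_j-1)\,a_1, & r_j+r\ge b+1.\end{cases}
$$
Maximising these expressions over all admissible $(s_j,r_j)$ is a short finite comparison using only $0<a_1<a_2<a_3$, $a_3-ba_1=b(b-1)a_1+(b^2-1)c>0$, $a_3-ba_2=(b-1)c>0$ and $(b+1)a_2=ba_1+a_3$: the maximum is attained at $s_j=p-1$, $r_j=b$ (that is, $k_j=p(b+1)-1$), where $m_j^{(p)}=(r-1)a_2+(q+p)a_3$ if $(b+1)\nmid a_1$ and $m_j^{(p)}=ba_2+(q+p-1)a_3$ if $(b+1)\mid a_1$. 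Lemma~\ref{lem:p-frob} then gives the stated formula after subtracting $a_1$.

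The delicate point is the middle step: pinning down exactly the value of $t$ at which the number of representations of $m_j^{(0)}+ta_1$ first reaches $p+1$. This rests on (i) the remark that for $p\le q$ only blocks $\ell\in\{0,1\}$ can ever be relevant, and (ii) the inequality $a_2\ge bs_j'$, which ensures that all block-$(\ell=0)$ representations are switched on before block $\ell=1$ starts, so the count passes through $p+1$ at the predicted moment. The concluding maximisation is routine but must be done with care, since the extremal residue class has $s_j=p-1<p$ and therefore lies in the second regime above — which is precisely why one cannot obtain $g_p$ by naively shifting the $p=0$ formula by a multiple of $a_1$.
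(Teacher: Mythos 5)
Your argument is correct and reaches the paper's formula by the same overall strategy --- explicitly determining ${\rm Ape}_p(A_{3,n})$ and invoking Lemma~\ref{lem:p-frob}, with everything resting on the identity $(b+1)(ab^{n+1}-c)=b(ab^n-c)+(ab^{n+2}-c)$ and with the same extremal residue class (position $(r-1,q+p)$, resp.\ $(b,q+p-1)$) --- but the way you compute the Ap\'ery elements is genuinely different in organisation. The paper builds ${\rm Ape}_0,{\rm Ape}_1,\dots,{\rm Ape}_p$ inductively by describing how the tabulated elements shift ($(x_2,x_3)\Rightarrow(x_2+b+1,x_3-1)$ in the main part, wrap-around rules for the top row) and verifies the multiplicities by exhibiting the explicit alternative representations coming from (\ref{eq:435}); you instead fix a residue class, parametrise \emph{all} representations at once via $w=x_2+(b+1)x_3$, split them into blocks $w=k_j+\ell a_1$, and read off the least $t$ for which $m_j^{(0)}+ta_1$ has $p+1$ representations from the resulting counting function. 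What your route buys is a closed formula for every $m_j^{(p)}$ in both regimes $s_j\ge p$ and $s_j\le p-1$, with the ``exactly $p+1$ ways, and only $p$ ways after subtracting $a_1$'' statement coming out automatically from monotonicity of the count, together with an explicit reason (the inequalities $a_2\ge bs_j'$ and $p\le q$) why higher blocks never interfere --- a point the paper handles only implicitly through its tables. What the paper's tabular construction buys is the visual picture that is then reused verbatim for the $c<0$ analysis and the Sylvester-number computation. Two small blemishes: your inline threshold ``$t=a_2-b(s_j'-p+s_j+1)$'' has a stray $s_j$ (it should be $t=a_2-b(s_j'-s_j)+b(p-s_j-1)=a_2-b(s_j'-p+1)$), but the displayed formula for $m_j^{(p)}$ that you actually maximise is the correct one; and the final maximisation is only asserted (as it also is in the paper), and, like the paper's own proof, your argument assumes $c>0$ throughout, so the second branch of the statement is established only under that extra hypothesis.
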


First, suppose that $r\ge 1$, that is $(b+1)\nmid(a b^n-c)$.   
Table \ref{tb:g0system} shows an array of elements in ${\rm Ap}_0(A_{3,n})$, where $c'=(b-1)c$ for short. Notice that 
\begin{multline*} 
\{0,1,2,\dots,a b^n-c-1\}\\
=\{0,(b-1)c,2(b-1)c,\dots,(q(b+1)+r-1)(b-1)c\}\pmod{q(b+1)+r}
\end{multline*}  
because $\gcd(A_{3,n})=1$ and $\gcd(b-1,c)=1$.  
Since $a b^{n+1}-c\equiv(b-1)c=c'\pmod{a b^n-c}$ and $a b^{n+2}-c\equiv(b^2-1)c=(b+1)c'\pmod{a b^n-c}$, for $0\le x_2\le b$ and $0\le x_3\le q-1$ or $0\le x_2\le r-1$ and $x_3=q$, we have 
$$ 
(a b^{n+1}-c)x_2+(a b^{n+2}-c)x_3\equiv c' x_2+(b+1)x_3\pmod{a b^n-c}\,. 
$$ 

In Table \ref{tb:g012psystem}, $(x_2,x_3)$ denotes the corresponding element $(a b^{n+1}-c)x_2+(a b^{n+2}-c)x_3$. 

As shown in Table \ref{tb:g01system}, based on the array of elements in ${\rm Ap}_0(A_{3,n})$, the corresponding array of elements in ${\rm Ap}_1(A_{3,n})$ is determined. In detail, elements that have the same modulo as the elements in the columns from the second row to the last row appear as they are by moving up one row to the right block. Only the elements of the same modulo that correspond to the $b+1$ elements in the top row are arranged to continue from the final element in ${\rm Ap}_0(A_{3,n})$, following the bottom row of the same block. 
In the main part, there exists the correspondence $(x_2,x_3)\Longrightarrow(x_2+b+1,x_3-1)$ because 
\begin{align*}
&(a b^{n+1}-c)x_2+(a b^{n+2}-c)x_3\\
&\equiv(a b^{n+1}-c)(x_2+b+1)+(a b^{n+2}-c)(x_3-1)\pmod{a b^n-c}\,. 
\end{align*}    
Concerning the elements in the top row, there are the correspondences $(x_2,0)\Longrightarrow(x_2+r,q)$ ($0\le x_2\le b-r$) and $(x_2,0)\Longrightarrow(x_2-b+r-1,q+1)$ ($b-r+1\le x_2\le b$) because  
\begin{align*}
(a b^{n+1}-c)x_2&\equiv(a b^{n+1}-c)(x_2+r)+(a b^{n+2}-c)q\,,\\ 
(a b^{n+1}-c)x_2&\equiv(a b^{n+1}-c)(x_2-b+r-1)+(a b^{n+2}-c)(q+1)\\
&\qquad\qquad \pmod{a b^n-c}\,, 
\end{align*}
respectively.  

We can show that each element in ${\rm Ap}_1(A_{3,n})$ is expressed in exactly two ways. For simplicity, write $(x_1,x_2,x_2):=(a b^n-c)x_1+(a b^{n+1}-c)x_2+(a b^{n+2}-c)x_3$. 
Concerning the main part in the second block, by 
\begin{equation}
(b+1)(a b^{n+1}-c)=b(a b^n-c)+(a b^{n+2}-c)\,, 
\label{eq:435} 
\end{equation}
we have 
$$
(0,b+x_2,x_3)=(b,x_2-1,x_3+1)\quad(1\le x_2\le b+1;\, x_3\ge 0)\,. 
$$ 
Concerning the ($q+1$)st elements in the first block, by the definition of $q$ and $r$, we have 
$$
(0,r+x_2,q)=(a b^{n+1}-c-b q,x_2,q)\quad (x_2\ge 0)\,. 
$$ 
Concerning the ($q+2$)nd elements in the first block, by the equation (\ref{eq:435}), we have 
$$
(0,x_2,q+1)=(a b^{n+1}-c-b(q+1),b+1-r+x_2,0)\quad (x_2\ge 0)\,. 
$$ 
However, each value minus $(a b^n-c)$ does not have two expressions but only one because, for example, $(-1,b+x_2,x_3)$ does not make sense.  

The same rule applies to ${\rm Ap}_2(A_{3,n})$, ${\rm Ap}_2(A_{3,n})$, and until ${\rm Ap}_p(A_{3,n})$.  
In Table \ref{tb:g01psystem}, $\ctext{j}$ ($j=0,1,2,\dots,q$) shows the range in which all the elements in ${\rm Ap}_j(A_{3,n})$ exist. For short, $q'=q-1$. 

And eventually, each element in ${\rm Ap}_q(A_{3,n})$ is expressed in exactly $q+1$ ways, and each element minus $(a b^n-c)$ in exactly $q$ ways. For example, concerning the elements in the bottom row of the leftmost block, by (\ref{eq:435}) we have 
\begin{align*}  
&(0,x_2,2 q)=(a b^{n+1}-c-b(q+1),(b+1)-r+x_2,q-1)\\
&=(a b^{n+1}-c-b(q+2),2(b+1)-r+x_2,q-2)\\
&=(a b^{n+1}-c-b(q+3),3(b+1)-r+x_2,q-3)\\
&=\cdots=(a b^{n+1}-c-b\cdot 2 q,q(b+1)-r+x_2,0)\quad(0\le x_2\le r-1)\,. 
\end{align*}

\begin{table}[htbp]
  \centering
\scalebox{0.8}{
\begin{tabular}{ccccc}
\cline{1-2}\cline{3-4}\cline{5-5}
\multicolumn{1}{|c}{$0$}&$c'$&$2 c'$&$\cdots$&\multicolumn{1}{c|}{$b c'$}\\
\multicolumn{1}{|c}{$(b+1)c'$}&$(b+2)c'$&$(b+3)c'$&$\cdots$&\multicolumn{1}{c|}{$(2 b+1)c'$}\\
\multicolumn{1}{|c}{$(2 b+2)c'$}&&&&\multicolumn{1}{c|}{}\\
\multicolumn{1}{|c}{$\vdots$}&&&&\multicolumn{1}{c|}{}\\
\multicolumn{1}{|c}{$(q-1)(b+1)c'$}&&&&\multicolumn{1}{c|}{$(q(b+1)-1)c'$}\\
\cline{4-5}
\multicolumn{1}{|c}{$q(b+1)c'$}&$\cdots$&\multicolumn{1}{c|}{$(q(b+1)+r-1)c'$}&&\\
\cline{1-2}\cline{3-3}
\end{tabular}
} 
  \caption{Complete residue system ${\rm Ap}_0(A_{3,n})$}
  \label{tb:g0system}
\end{table}

\begin{table}[htbp]
  \centering
\scalebox{0.5}{
\begin{tabular}{cccccccccc}
\multicolumn{1}{|c}{}&&&&&\multicolumn{1}{|c}{}&&&&\multicolumn{1}{c|}{}\\
\cline{1-2}\cline{3-4}\cline{5-6}\cline{7-8}\cline{9-10}
\multicolumn{1}{|c}{$0$}&$c'$&$\cdots$&$\cdots$&$b c'$&\multicolumn{1}{|c}{$(b+1)c'$}&$(b+2)c'$&$\cdots$&$\cdots$&\multicolumn{1}{c|}{$(2 b+1)c'$}\\
\multicolumn{1}{|c}{$(b+1)c'$}&$(b+2)c'$&$\cdots$&$\cdots$&$(2 b+1)c'$&\multicolumn{1}{|c}{$(2 b+2)c'$}&$(2 b+3)c'$&&&\multicolumn{1}{c|}{$(3 b+2)c'$}\\
\multicolumn{1}{|c}{$(2 b+2)c'$}&$(2 b+3)c'$&&&$(3 b+2)c'$&\multicolumn{1}{|c}{$(3 b+3)c'$}&&&&\multicolumn{1}{c|}{}\\
\multicolumn{1}{|c}{$\vdots$}&&&&&\multicolumn{1}{|c}{$\vdots$}&&&&\multicolumn{1}{c|}{}\\
\multicolumn{1}{|c}{$\vdots$}&&&&&\multicolumn{1}{|c}{$(q-1)(b+1)c'$}&&&&\multicolumn{1}{c|}{$(q(b+1)-1)c'$}\\
\cline{9-10}
\multicolumn{1}{|c}{$(q-1)(b+1)c'$}&$\cdots$&&&\multicolumn{1}{c|}{$(q(b+1)-1)c'$}&$q(b+1)c'$&$\cdots$&\multicolumn{1}{c|}{$(q(b+1)+r-1)c'$}&&\\
\cline{4-4}\cline{5-6}\cline{7-8}
\multicolumn{1}{|c}{$q(b+1)c'$}&$\cdots$&\multicolumn{1}{c|}{$(q(b+1)+r-1)c'$}&$0\cdots$&\multicolumn{1}{c|}{$(b-r)c'$}&&&&&\\
\cline{1-2}\cline{3-4}\cline{5-5}
\multicolumn{1}{|c}{$(b-r+1)c'$}&$\cdots$&\multicolumn{1}{c|}{$b c'$}&&&&&&&\\
\cline{1-2}\cline{3-3}
\end{tabular}
} 
  \caption{Complete residue systems ${\rm Ap}_0(A_{3,n})$ and ${\rm Ap}_1(A_{3,n})$}
  \label{tb:g01system}
\end{table}

\begin{table}[htbp]
  \centering
\scalebox{0.8}{
\begin{tabular}{ccccccccccccccccc}
\cline{1-2}\cline{3-4}\cline{5-6}\cline{7-8}\cline{9-9}\cline{13-14}\cline{15-16}\cline{17-17}
\multicolumn{1}{|c}{}&&&\multicolumn{1}{|c}{}&&&\multicolumn{1}{|c}{}&&&\multicolumn{1}{|c}{$\cdots$}&$\cdots$&$\cdots$&\multicolumn{1}{|c}{}&$\ctext{q'}$&&\multicolumn{1}{|c}{$\ctext{q}$}&\multicolumn{1}{c|}{}\\ 
\cline{15-16}\cline{17-17}
\multicolumn{1}{|c}{}&&&\multicolumn{1}{|c}{}&&&\multicolumn{1}{|c}{}&&&\multicolumn{1}{|c}{}&&&\multicolumn{1}{|c}{}&\multicolumn{1}{c|}{}&\multicolumn{1}{c|}{$\ctext{q}$}&&\\ 
\cline{13-14}\cline{15-15}
\multicolumn{1}{|c}{}&&&\multicolumn{1}{|c}{}&$\ctext{1}$&&\multicolumn{1}{|c}{}&$\ctext{2}$&&\multicolumn{1}{|c}{}&&\multicolumn{1}{c|}{}&&\multicolumn{1}{c|}{$\ctext{q}$}&&&\\ 
\cline{13-14}
\multicolumn{1}{|c}{}&$\ctext{0}$&&\multicolumn{1}{|c}{}&&&\multicolumn{1}{|c}{}&&&\multicolumn{1}{|c}{}&&&&&&&\\ 
\cline{9-9}
\multicolumn{1}{|c}{}&&&\multicolumn{1}{|c}{}&&&\multicolumn{1}{|c}{}&\multicolumn{1}{c|}{}&&&&&&&&&\\ 
\cline{6-6}\cline{7-8}
\multicolumn{1}{|c}{}&&&\multicolumn{1}{|c}{}&&\multicolumn{1}{|c}{$\ctext{2}$}&\multicolumn{1}{|c}{}&&&&&&&&&&\\ 
\cline{3-4}\cline{5-6}
\multicolumn{1}{|c}{}&&\multicolumn{1}{|c}{$\ctext{1}$}&\multicolumn{1}{|c}{}&$\ctext{2}$&\multicolumn{1}{|c}{}&&&&&&&&&&&\\ 
\cline{1-2}\cline{3-4}\cline{5-5}\cline{9-9}
\multicolumn{1}{|c}{}&$\ctext{1}$&\multicolumn{1}{|c}{$\ctext{2}$}&\multicolumn{1}{|c}{}&&&&&\multicolumn{1}{|c}{$\ctext{q'}$}&\multicolumn{1}{|c}{}&&&&&&&\\ 
\cline{1-2}\cline{3-3}\cline{7-8}\cline{9-9}
\multicolumn{1}{|c}{}&$\ctext{2}$&\multicolumn{1}{|c}{}&&&&\multicolumn{1}{|c}{}&$\ctext{q'}$&$\ctext{q}$&\multicolumn{1}{|c}{}&&&&&&&\\ 
\cline{1-2}\cline{6-7}\cline{8-9}
&$\vdots$&&&&\multicolumn{1}{|c}{$\ctext{q'}$}&\multicolumn{1}{|c}{}&$\ctext{q}$&\multicolumn{1}{|c}{}&&&&&&&&\\ 
\cline{4-4}\cline{5-6}\cline{7-8}
&$\vdots$&&\multicolumn{1}{|c}{}&$\ctext{q'}$&\multicolumn{1}{|c}{$\ctext{q}$}&\multicolumn{1}{|c}{}&&&&&&&&&&\\ 
\cline{3-4}\cline{5-6}
&$\vdots$&\multicolumn{1}{|c}{$\ctext{q'}$}&\multicolumn{1}{|c}{}&$\ctext{q}$&\multicolumn{1}{|c}{}&&&&&&&&&&&\\ 
\cline{1-2}\cline{3-4}\cline{5-5}
\multicolumn{1}{|c}{}&$\ctext{q'}$&\multicolumn{1}{|c}{$\ctext{q}$}&\multicolumn{1}{|c}{}&&&&&&&&&&&&&\\ 
\cline{1-2}\cline{3-3}
\multicolumn{1}{|c}{}&$\ctext{q}$&\multicolumn{1}{|c}{}&&&&&&&&&&&&&&\\ 
\cline{1-2}
\end{tabular}
} 
  \caption{Complete residue systems ${\rm Ap}_0(A_{3,n})$ until ${\rm Ap}_p(A_{3,n})$}
  \label{tb:g01psystem}
\end{table}

\begin{landscape}
\begin{table}[htbp]
  \centering
\scalebox{0.65}{
\begin{tabular}{cccccccccccccccc}
\multicolumn{1}{|c}{}&&&&&\multicolumn{1}{|c}{}&&&&&\multicolumn{1}{|c}{}&&&&&\multicolumn{1}{|c}{}\\ 
\cline{1-2}\cline{3-4}\cline{5-6}\cline{7-8}\cline{9-10}\cline{11-12}\cline{13-14}\cline{15-15}
\multicolumn{1}{|c}{$(0,0)$}&$(1,0)$&$\cdots$&$\cdots$&$(b,0)$&\multicolumn{1}{|c}{$(b+1,0)$}&$(b+2,0)$&$\cdots$&$\cdots$&$(2 b+1,0)$&\multicolumn{1}{|c}{$(2 b+2,0)$}&&&&$(3 b+2,0)$&\multicolumn{1}{|c}{$\cdots$}\\
\multicolumn{1}{|c}{$(0,1)$}&$(1,1)$&$\cdots$&$\cdots$&$(b,1)$&\multicolumn{1}{|c}{$(b+1,1)$}&$(b+2,1)$&$\cdots$&$\cdots$&$(2 b+1,1)$&\multicolumn{1}{|c}{$(2 b+2,1)$}&&&&$(3 b+2,1)$&\multicolumn{1}{|c}{}\\
\multicolumn{1}{|c}{}&&&&$\vdots$&\multicolumn{1}{|c}{$\vdots$}&&&&$\vdots$&\multicolumn{1}{|c}{$\vdots$}&&&&$\vdots$&\multicolumn{1}{|c}{$\vdots$}\\
\multicolumn{1}{|c}{$\vdots$}&&&&$\vdots$&\multicolumn{1}{|c}{$\vdots$}&&&&$\vdots$&\multicolumn{1}{|c}{$(2 b+2,q-3)$}&$\cdots$&&$\cdots$&$(3 b+2,q-3)$&\multicolumn{1}{|c}{}\\
\cline{14-15}
\multicolumn{1}{|c}{$(0,q-2)$}&$(1,q-2)$&$\dots$&$\dots$&$(b,q-2)$&\multicolumn{1}{|c}{$(b+1,q-2)$}&$\cdots$&$\dots$&$\dots$&$(2 b+1,q-2)$&\multicolumn{1}{|c}{$(2 b+2,q-2)$}&&\multicolumn{1}{c|}{$(2 b+r+1,q-2)$}&&&\\ 
\cline{9-10}\cline{11-12}\cline{13-13}
\multicolumn{1}{|c}{$(0,q-1)$}&$(1,q-1)$&$\dots$&$\dots$&$(b,q-1)$&\multicolumn{1}{|c}{$(b+1,q-1)$}&$\dots$&\multicolumn{1}{c|}{$(b+r,q-1)$}&$\cdots$&\multicolumn{1}{c|}{$(2 b+1,q-1)$}&&&&&&\\ 
\cline{4-4}\cline{5-6}\cline{7-8}\cline{9-10}
\multicolumn{1}{|c}{$(0,q)$}&$\dots$&\multicolumn{1}{c|}{$(r-1,q)$}&$(r,q)\dots$&\multicolumn{1}{c|}{$(b,q)$}&$(b+1,q)$&$\cdots$&\multicolumn{1}{c|}{$(b+r,q)$}&&&&&&&&\\ 
\cline{1-2}\cline{3-4}\cline{5-6}\cline{7-8} 
\multicolumn{1}{|c}{$(0,q+1)$}&$\dots$&\multicolumn{1}{c|}{$(r-1,q+1)$}&$(r,q+1)\dots$&\multicolumn{1}{c|}{$(b,q+1)$}&&&&&&&&&&&\\ 
\cline{1-2}\cline{3-4}\cline{5-5} 
\multicolumn{1}{|c}{$(0,q+2)$}&$\dots$&\multicolumn{1}{c|}{$(r-1,q+2)$}&$\dots$&\multicolumn{1}{c|}{$\dots$}&&&&&&&&&&&\\ 
\cline{1-2}\cline{3-4}\cline{5-5} 
$\vdots$&&&&$\vdots$&&&&&&&&&&\\ 
$\vdots$&&&&$\vdots$&&&&&&&&&&\\ 
\cline{1-2}\cline{3-4}\cline{5-5} 
\multicolumn{1}{|c}{$(0,2 q-1)$}&$\dots$&\multicolumn{1}{c|}{$(r-1,2 q-1)$}&$(r,2 q-1)\dots$&\multicolumn{1}{c|}{$(b,2 q-1)$}&&&&&&&&&&&\\ 
\cline{1-2}\cline{3-4}\cline{5-5} 
\multicolumn{1}{|c}{$(0,2 q)$}&$\dots$&\multicolumn{1}{c|}{$(r-1,2 q)$}&&&&&&&&&&&&&\\ 
\cline{1-2}\cline{3-3}
\end{tabular}
}  
\vskip1cm 
\scalebox{0.65}{ 
\begin{tabular}{cccccccccccc}
&&&&\multicolumn{1}{|c}{}&&&&&\multicolumn{1}{|c}{}&&\multicolumn{1}{c|}{}\\
\cline{5-6}\cline{7-8}\cline{9-10}\cline{11-12}
$\dots$&$\dots$&$\dots$&$\dots$&\multicolumn{1}{|c}{$((q-1)(b+1),0)$}&$\dots$&$\dots$&$\dots$&\multicolumn{1}{c|}{$(q(b+1)-1,0)$}&$(q(b+1),0)$&$\dots$&\multicolumn{1}{c|}{$(q(b+1)+r-1,0)$}\\
\cline{5-6}\cline{7-8}\cline{9-10}\cline{11-12}
&&&&\multicolumn{1}{|c}{$((q-1)(b+1),1)$}&$\dots$&\multicolumn{1}{c|}{$((q-1)(b+1)+r-1,1)$}&$\dots$&\multicolumn{1}{c|}{$(q(b+1)-1,1)$}&&&\\
\cline{5-6}\cline{7-8}\cline{9-9}
&&&&\multicolumn{1}{|c}{$((q-1)(b+1),2)$}&$\dots$&\multicolumn{1}{c|}{$((q-1)(b+1)+r-1,2)$}&&&&&\\
\cline{5-6}\cline{7-7}
\end{tabular} 
} 
  \caption{Complete residue systems ${\rm Ap}_0(A_{3,n})$, ${\rm Ap}_1(A_{3,n})$, ${\rm Ap}_2(A_{3,n})$, $\dots$, ${\rm Ap}_p(A_{3,n})$}
  \label{tb:g012psystem}
\end{table} 
\end{landscape}

In ${\rm Ap}_0(A_{3,n})$, the largest element is at $(r-1,q)$ or at $(b,q-1)$. Since by $r\ge 1$ and $c>0$ 
\begin{equation}
(a b^{n+1}-c)(r-1)+(a b^{n+2}-c)q>(a b^{n+1}-c)b+(a b^{n+2}-c)(q-1) 
\label{eq:ape0,c>0}
\end{equation} 
and Lemma \ref{lem:p-frob}, we have 
$$
g_0(A_{3,n})=(a b^{n+1}-c)(r-1)+(a b^{n+2}-c)q-(a b^n-c)\,. 
$$ 
Note that if $c<0$, the relation (\ref{eq:ape0,c>0}) is not necessarily valid. We discuss the case $c<0$ in the later section.  
In ${\rm Ap}_1(A_{3,n})$, comparing the elements at $(r-1,q+1)$, at $(b,q)$ or at $(b+r,q-1)$, by $c>0$, we have 
$$
g_1(A_{3,n})=(a b^{n+1}-c)(r-1)+(a b^{n+2}-c)(q+1)-(a b^n-c)\,. 
$$ 
Similarly, for $0\le p\le q$ the largest element exists at $(r-1,q+p)$. Hence, we have 
$$
g_p(A_{3,n})=(a b^{n+1}-c)(r-1)+(a b^{n+2}-c)(q+p)-(a b^n-c)\,. 
$$ 
However, when $p=q+1$, no element comes to the position at $(r-1,q+p+1)$ because there are only $r$ elements at the top row in the rightmost block of ${\rm Ap}_p(A_{3,n})$.

Next, suppose that $r=0$, that is $(b+1)\mid(a b^n-c)$. In this case, there is no ($q+1$)-st row in Table \ref{tb:g0system}, and eventually, the rightmost block constituted from $r$ elements in Table \ref{tb:g012psystem} does not exist. Hence, the maximal element in ${\rm Ap}_p$ is at the position $(b,q+p-1)$ ($0\le p\le q$). Therefore,   
$$
g_p(A_{3,n})=(a b^{n+1}-c)b+(a b^{n+2}-c)(q+p-1)-(a b^n-c)\,. 
$$

\section{Sylvester number}  

The number of positive integers with no nonnegative integer representation by $a_1,\dots,a_k$ is called the {\it Sylvester number} and denoted by $n(a_1,\dots,a_k)$. In numerical semigroups, it is often called the {\it genus}. The genus has a very important role in the study of algebraic curves (see, e.g., \cite{KaKo}). In a natural generalization, consider the number of positive integers whose number of representations with nonnegative integers in terms of $a_1,\dots,a_k$ is at most $p$. Such a general Sylvester number may be called the $p$-Sylvester number or the $p$-genus, and denoted by $n_p(a_1,\dots,a_k)$. So, when $p=0$, $n(a_1,\dots,a_k)=n_0(a_1,\dots,a_k)$ is the original Sylvester number.  
One of the convenient formulas to obtain the $p$-Sylvester number is via the elements in the corresponding Ap\'ery set (\cite{Ko-p}).  

\begin{Lem}  
Let $\gcd(a_1,\dots,a_k)=1$ with $a_1=\min\{a_1,\dots,a_k\}$. Then we have 
$$
n_p(a_1,\dots,a_k)=\frac{1}{a_1}\sum_{j=0}^{a_1-1}m_j^{(p)}-\frac{a_1-1}{2}\,. 
$$ 
\label{lem:p-sylv}
\end{Lem}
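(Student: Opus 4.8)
The plan is to count the positive integers with at most $p$ representations one residue class modulo $a_1$ at a time, exactly as in the classical $p=0$ derivation of the genus from the Apéry set. The only structural input I need is a monotonicity property of the denumerant: writing $d(m)$ for the number of representations of $m$ by $a_1,\dots,a_k$, one has $d(m)\le d(m+a_1)$ for every $m\ge 0$. Indeed, incrementing the coefficient of $a_1$ sends a representation of $m$ to a representation of $m+a_1$, and this map is injective, so $d$ is nondecreasing along every arithmetic progression with common difference $a_1$. (Finiteness of each $m_j^{(p)}$ is guaranteed by $\gcd(a_1,\dots,a_k)=1$, which forces all sufficiently large integers to have arbitrarily many representations.)

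Granting this, fix $j$ with $0\le j\le a_1-1$ and run along the progression $j,\,j+a_1,\,j+2a_1,\dots$. By the definition of ${\rm Ape}_p(A)$, $m_j^{(p)}$ is the least term of this progression whose denumerant is $\ge p+1$ (with the convention $m_0^{(p)}=0$ handling the class of $0$); since $d$ is nondecreasing along the progression, every earlier term has denumerant $\le p$ and every later term has denumerant $\ge p+1$. Hence the $p$-gaps in this residue class are precisely $j,\,j+a_1,\dots,\,m_j^{(p)}-a_1$, of which there are $(m_j^{(p)}-j)/a_1$.

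Summing over the complete residue system ${\rm Ape}_p(A)\equiv\{0,1,\dots,a_1-1\}\pmod{a_1}$, every positive integer with at most $p$ representations is counted exactly once, so
$$
n_p(a_1,\dots,a_k)=\sum_{j=0}^{a_1-1}\frac{m_j^{(p)}-j}{a_1}=\frac{1}{a_1}\sum_{j=0}^{a_1-1}m_j^{(p)}-\frac{a_1-1}{2},
$$
where the last step uses $\sum_{j=0}^{a_1-1}j=a_1(a_1-1)/2$; this is the asserted identity, and the case $p=0$ recovers the usual genus formula, essentially that of Brauer and Shockley.

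The computation is routine once the monotonicity of $d$ is in hand; I expect the only point needing genuine care to be the bookkeeping in the residue class of $0$ — reconciling the convention $m_0^{(p)}=0$ with the count of $p$-gaps there and with the fact that $0$ itself has exactly one representation. I would isolate that class, check the count $(m_0^{(p)}-0)/a_1$ by hand, and absorb any boundary contribution into the term $-\tfrac{a_1-1}{2}$ so that the final formula is uniform across all residues.
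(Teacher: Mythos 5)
The paper itself does not prove this lemma: it is quoted from \cite{Ko-p}, with the remark that the case $p=0$ is essentially Selmer's. Your residue-class argument --- monotonicity of the denumerant $d$ along each progression $j,\,j+a_1,\,j+2a_1,\dots$ (via the injective map that raises the coefficient of $a_1$), so that the integers of class $j$ with at most $p$ representations are exactly those below $m_j^{(p)}$, then summing $(m_j^{(p)}-j)/a_1$ over a complete residue system --- is the standard route and is the expected proof of the quoted result; for the classes $j\ne 0$ your count is correct and complete.

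The one genuine gap is the class of $0$, and it is not bookkeeping that can be absorbed into the term $-\frac{a_1-1}{2}$ (that term is already fully spent on $\sum_{j=1}^{a_1-1}j$). For $p\ge 1$ the convention $m_0^{(p)}=0$ is incompatible with the characterization your argument actually uses ($m_j^{(p)}$ is the least element of its class with $d\ge p+1$), because $d(0)=1\le p$; and if one literally inserts $m_0^{(p)}=0$ into the displayed identity it becomes false: for $(a_1,a_2)=(2,3)$ and $p=1$ one has $m_1^{(1)}=9$, so the right-hand side would equal $4$, while the integers with at most one representation are $0,1,2,3,4,5,7$. The correct reading --- and the one the paper tacitly uses when it sums the Ap\'ery elements in the proof of Theorem \ref{th:p-sylv}, where the element of ${\rm Ape}_p(A_{3,n})$ in the residue class of $0$ is a positive integer once $p\ge 1$ --- is that $m_0^{(p)}$ is the least nonnegative integer $\equiv 0\pmod{a_1}$ having at least $p+1$ representations, so that $m_0^{(p)}=0$ only when $p=0$. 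With that reading the class of $0$ contributes $m_0^{(p)}/a_1$ integers with at most $p$ representations, \emph{provided} $0$ itself is counted among them (it has $1\le p$ representations), and your summation then closes exactly as you wrote it; if instead one counts only positive integers, as the prose definition in the Sylvester-number section states, the displayed formula overcounts by exactly $1$ for every $p\ge 1$. So the approach is right, but the class of $0$ must be settled by fixing the definition of $m_0^{(p)}$ and of what is being counted, not by adjusting the constant term.
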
  

\noindent 
{\it Remark.}  
When $p=0$, it is essentially due to Selmer \cite{se77}.

\begin{theorem}  
If $\gcd(A_{3,n})=1$, then for $0\le p\le q$ we have 
\begin{align*} 
n_p(A_{3,n})&=\frac{1}{2}\bigl((a b^n-c-1)(a b^{n+1}-c-1)-b q(2(a b^n-c)-(b+1)(q+1))\\
&\qquad +(b+1)p(2(a b^{n+1}-c)-b(p+1))\bigr)\,. 
\end{align*} 
\label{th:p-sylv}
\end{theorem}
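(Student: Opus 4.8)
\textbf{Proof proposal for Theorem \ref{th:p-sylv}.}

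The plan is to apply Lemma \ref{lem:p-sylv}, which reduces the computation of $n_p(A_{3,n})$ to evaluating the sum $\sum_{j=0}^{a_1-1}m_j^{(p)}$ of all elements of ${\rm Ape}_p(A_{3,n})$, where $a_1=a b^n-c=q(b+1)+r$. From the analysis preceding Theorem \ref{th:p-frob}, each element of ${\rm Ape}_p(A_{3,n})$ is of the form $(a b^{n+1}-c)x_2+(a b^{n+2}-c)x_3$ for a pair $(x_2,x_3)$ lying in the explicit region depicted in Table \ref{tb:g012psystem}; the key point is that these pairs are precisely the pairs appearing in ${\rm Ape}_0$ with each $x_3$ augmented, block by block, in the controlled way described by the correspondences $(x_2,x_3)\Rightarrow(x_2+b+1,x_3-1)$ and the top-row shifts. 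Thus I would write
\[
\sum_{j=0}^{a_1-1}m_j^{(p)}=(a b^{n+1}-c)\sum(x_2)+(a b^{n+2}-c)\sum(x_3),
\]
where the two sums range over all pairs in the ${\rm Ape}_p$ region, and compute each of $\sum x_2$ and $\sum x_3$ combinatorially.

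First I would treat the baseline case ${\rm Ape}_0(A_{3,n})$, reading off from Table \ref{tb:g0system}: the pairs are $\{(x_2,x_3):0\le x_2\le b,\ 0\le x_3\le q-1\}$ together with $\{(x_2,q):0\le x_2\le r-1\}$, so $\sum x_2$ and $\sum x_3$ over ${\rm Ape}_0$ are elementary closed sums in $b,q,r$. Next, I would account for the increments passing from ${\rm Ape}_{j-1}$ to ${\rm Ape}_j$: when $r\ge 1$, exactly $b+1$ elements (the top row) are displaced, moving from the columns above into the rightmost block, and the effect on $\sum x_3$ is a fixed additive amount (roughly $(b+1)$ added to the total $x_3$, after bookkeeping the split $0\le x_2\le b-r$ versus $b-r+1\le x_2\le b$), with a corresponding adjustment to $\sum x_2$; iterating this over $j=1,\dots,p$ produces terms quadratic in $p$. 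I would verify the answer is consistent by the identity
\[
n_p(A_{3,n})-n_{p-1}(A_{3,n})=\frac{1}{a_1}\bigl(\text{(total change in }m_j\text{)}\bigr),
\]
and I expect the change to match $(b+1)\bigl((a b^{n+1}-c)-b p\bigr)/a_1$ after simplification, telescoping to the stated $(b+1)p(2(a b^{n+1}-c)-b(p+1))/2$ term. The leading term $(a b^n-c-1)(a b^{n+1}-c-1)/2$ should emerge as the $p=0$, ``geometric-like'' genus, and the $-bq(2(a b^n-c)-(b+1)(q+1))/2$ correction from the fact that ${\rm Ape}_0$ uses only $b+1<a b^{n+1}-c$ columns. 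I would also need to handle the case $r=0$ separately (no rightmost $r$-block), checking that the formula still holds since the $r$-dependent terms vanish.

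The main obstacle will be the careful block-by-block bookkeeping of $\sum x_3$: as $j$ increases, the region acquires new rows at the bottom of the leftmost blocks (indices $x_3$ up to $q+j$ in the $r$-wide column, up to $q+j-1$ elsewhere), and one must make sure no pair is double-counted and that the split between ``stays in place but shifts up a row'' and ``wraps to the rightmost block'' is handled with the correct multiplicities. I would organize this by computing $\sum_{(x_2,x_3)\in{\rm Ape}_j} x_3$ directly as a function of $j$ from the geometry of Table \ref{tb:g012psystem} — grouping the contribution of each of the $q+1$ diagonal ``staircase'' blocks — rather than incrementally, since a direct count is less error-prone; then summing a clean polynomial in $j,b,q,r$ and substituting into Lemma \ref{lem:p-sylv}. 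A final sanity check is to specialize to known cases (e.g. $p=0$, or $b=2$, $a=1$, $c=\pm1$) and compare with the literature, and to confirm $n_p(A_{3,n})>n_{p-1}(A_{3,n})$ as required.
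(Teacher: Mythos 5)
Your overall framework is the same as the paper's: reduce to $\sum_{j}m_j^{(p)}$ via Lemma \ref{lem:p-sylv}, read off the positions $(x_2,x_3)$ of the elements of ${\rm Ape}_p(A_{3,n})$ from the tables, and evaluate $(a b^{n+1}-c)\sum x_2+(a b^{n+2}-c)\sum x_3$ by elementary closed sums. But your working description of how ${\rm Ape}_p$ evolves with $p$ contains a genuine error that would derail the computation if you relied on it. It is not true that in passing from ${\rm Ape}_{p-1}$ to ${\rm Ape}_p$ ``exactly $b+1$ elements (the top row) are displaced'': the minimal element of \emph{every} residue class changes. Each element not in the top row moves via $(x_2,x_3)\Rightarrow(x_2+b+1,x_3-1)$, and by (\ref{eq:435}) its value increases by exactly $b(a b^n-c)$; these $a_1-(b+1)$ increments of size $b\,a_1$ (with $a_1=a b^n-c$) are the dominant part of the change of $\sum_j m_j^{(p)}$, not the $b+1$ wrapped top-row elements. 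This is also why your consistency check is off by a factor of $a_1$: from the stated formula, $n_p(A_{3,n})-n_{p-1}(A_{3,n})=(b+1)\bigl((a b^{n+1}-c)-b p\bigr)$, not this quantity divided by $a_1$, and a bookkeeping in which only $b+1$ elements move could never produce an increment of that size.

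The same misconception leaks into your ``direct'' fallback: the region for ${\rm Ape}_p$ is not the ${\rm Ape}_0$ region with extra rows appended at the bottom left. As in the paper's proof, the main block of ${\rm Ape}_p$ sits in columns $p(b+1),\dots,(p+1)(b+1)-1$, with rows $0,\dots,q-p-1$ full and a partial row of $r$ elements at row $q-p$, and the remaining elements form $p$ staircase remnants of sizes $b+1-r$ and $r$ at rows $q-p+1,\dots,q+p$ in the earlier blocks, keeping the total at $a_1=q(b+1)+r$. The rightward shift of the main block by $p(b+1)$ columns (and the loss of its bottom rows) is precisely what produces the terms like $(q-p)\bigl(p(b+1)^2+\tfrac{b(b+1)}{2}\bigr)$ in $\sum x_2$ and hence the $(b+1)p\bigl(2(a b^{n+1}-c)-b(p+1)\bigr)$ part of the formula; a region that merely ``acquires new rows at the bottom'' while keeping the old ones would contain more than $a_1$ positions and would miss these contributions (your singling out of $\sum x_3$ as the main difficulty is a symptom of this, since the essential $p$-dependence enters through $\sum x_2$). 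If you replace the incremental/accretive picture by the correct block-plus-staircase enumeration and then sum as you propose, you recover exactly the paper's proof.
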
  
\begin{proof}  
We can find the elements in ${\rm Ap}_p(A_{3,n})$ as follows.  Regarding the rightmost main block, the elements exist at 
\begin{align*}  
&\bigl(p(b+1),0\bigr),~\bigl(p(b+1)+1,0\bigr),~\dots,~\bigl((p+1)(b+1)-1,0\bigr),\\
&\bigl(p(b+1),1\bigr),~\bigl(p(b+1)+1,1\bigr),~\dots,~\bigl((p+1)(b+1)-1,1\bigr),\\
&\qquad\qquad\qquad\cdots\\
&\bigl(p(b+1),q-p-1\bigr),~\bigl(p(b+1)+1,q-p-1\bigr),\\
&\qquad\qquad\qquad\qquad\qquad\qquad~\dots,~\bigl((p+1)(b+1)-1,q-p-1\bigr),\\
&\bigl(p(b+1),q-p\bigr),~\bigl(p(b+1)+1,q-p\bigr),~\dots,~\bigl((p+1)(b+1)+r-1,q-p\bigr)\,. 
\end{align*} 
Regarding the parts in the remaining stairs, the elements exist at  
\begin{align*}  
&\qquad\qquad \bigl((p-1)(b+1)+r,q-p+1\bigr),~\dots,~\bigl(p(b+1)-1,q-p+1\bigr),\\
&
\bigl((p-1)(b+1),q-p+2\bigr),~\dots,~\bigl((p-1)(b+1)+r-1,q-p+2\bigr),\\ 
&\qquad\qquad \bigl((p-2)(b+1)+r,q-p+3\bigr),~\dots,~\bigl((p-1)(b+1)-1,q-p+3\bigr),\\
&
\bigl((p-2)(b+1),q-p+4\bigr),~\dots,~\bigl((p-2)(b+1)+r-1,q-p+4\bigr),\\ 
&\qquad\qquad\qquad\cdots\\ 
&\qquad\qquad (b+1+r,q+p-3\bigr),~\dots,~\bigl(2(b+1)-1,q+p-3\bigr),\\
&
(b+1,q+p-2\bigr),~\dots,~(b+r,q+p-2),\\ 
&\qquad\qquad (r,q+p-1\bigr),~\dots,~(b,q+p-1),\\
&
(0,q+p\bigr),~\dots,~(r-1,q+p)\,.  
\end{align*} 
Hence, 
\begin{align*} 
&\sum_{j=0}^{a_1-1}m_j^{(p)}\\
&=(q-p)\left(p(b+1)^2+\sum_{i=0}^b i\right)(a b^{n+1}-c)+(b+1)\left(\sum_{i=0}^{q-p-1}j\right)(a b^{n+2}-c)\\
&\quad +\left(p(b+1)r+\sum_{i=0}^{r-1}i\right)(a b^{n+1}-c)+r(q-p)(a b^{n+2}-c)\\
&\quad +\left((p-1)(b+1)(b-r+1)+\sum_{i=r}^b i\right)(a b^{n+1}-c)\\
&\qquad +(b-r+1)(q-p+1)(a b^{n+2}-c)\\ 
&\quad +\left((p-1)(b+1)r+\sum_{i=0}^{r-1}i\right)(a b^{n+1}-c)+r(q-p+2)(a b^{n+2}-c)\\
&\quad +\left((p-2)(b+1)(b-r+1)+\sum_{i=r}^b i\right)(a b^{n+1}-c)\\
&\qquad +(b-r+1)(q-p+3)(a b^{n+2}-c)\\ 
&\quad +\left((p-2)(b+1)r+\sum_{i=0}^{r-1}i\right)(a b^{n+1}-c)+r(q-p+4)(a b^{n+2}-c)\\ &\quad +\cdots\\
&\quad +\left((b+1)(b-r+1)+\sum_{i=r}^b i\right)(a b^{n+1}-c)+(b-r+1)(q+p-3)(a b^{n+2}-c)\\ 
&\quad +\left((b+1)r+\sum_{i=0}^{r-1}i\right)(a b^{n+1}-c)+r(q+p-2)(a b^{n+2}-c)\\ 
&\quad +\left(\sum_{i=r}^b i\right)(a b^{n+1}-c)+(b-r+1)(q+p-1)(a b^{n+2}-c)\\ 
&\quad +\left(\sum_{i=0}^{r-1}i\right)(a b^{n+1}-c)+r(q+p)(a b^{n+2}-c)\\
&=(q-p)\left(p(b+1)^2+\frac{b(b+1)}{2}\right)(a b^{n+1}-c)\\
&\qquad +(b+1)\frac{(q-p-1)(q-p)}{2}(a b^{n+2}-c)\\
&\quad +\left(p(b+1)r+\frac{(r-1)r}{2}\right)(a b^{n+1}-c)+r(q-p)(a b^{n+2}-c)\\
&\quad +\left(\frac{(p-1)p}{2}(b+1)(b+1-r)+p\left(\frac{b(b+1)}{2}-\frac{(r-1)r}{2}\right)\right)(a b^{n+1}-c)\\
&\quad +(b-r+1)\left(\sum_{l=1}^p(q-p+2 l-1)\right)(a b^{n+2}-c)\\ 
&\quad +\left(\frac{(p-1)p}{2}(b+1)r+p\frac{(r-1)r}{2}\right)(a b^{n+1}-c)\\
&\qquad +r\left(\sum_{l=1}^p(q-p+2 l)\right)(a b^{n+2}-c)\\ 
&=\frac{b(b+1)q+r(r-1)-p^2(b+1)^2+p(b+1)\bigl((b+1)(2 q-1)+2 r\bigr)}{2}\\
&\qquad \times(a b^{n+1}-c)\\
&\quad+\left(\frac{(b+1)\bigl(q(q-1)+p(p+1)\bigr)}{2}+q r\right)(a b^{n+2}-c)\\
&=\frac{a b^n-c}{2}\bigl((a b^n-c-1)(a b^{n+1}-c)-b q(2(a b^n-c)-(b+1)(q+1))\\
&\qquad +(b+1)p(2(a b^{n+1}-c)-b(p+1))\bigr)\,.
\end{align*}
Hence, by Lemma \ref{lem:p-sylv}, we obtain that 
\begin{align*} 
&n_p(A_{3,n})\\
&=\frac{1}{2}\bigl((a b^n-c-1)(a b^{n+1}-c-1)-b q(2(a b^n-c)-(b+1)(q+1))\\
&\qquad +(b+1)p(2(a b^{n+1}-c)-b(p+1))\bigr)\,.
\end{align*}
\end{proof}

\section{The case $c<0$}  

When $c<0$ and $r>1$, all the frameworks in Tables \ref{tb:g0system}, \ref{tb:g01system}, \ref{tb:g01psystem} and \ref{tb:g012psystem} are the same as the case $c>0$. However, the maximal element in ${\rm Ap}_p(A_{3,n})$ may be different. In fact, when $c<0$ and $r=1$, the relation (\ref{eq:ape0,c>0}) is not valid.  For clarity, we put $c_0=-c>0$.  

In ${\rm Ap}_0(A_{3,n})$, the largest element is at $(r-1,q)$ or at $(b,q-1)$. Since by $r\ge 1$ and $c_0>0$, the relation 
$$ 
(a b^{n+1}+c_0)(r-1)+(a b^{n+2}+c_0)q>(a b^{n+1}+c_0)b+(a b^{n+2}+c_0)(q-1) 
$$ 
is equivalent to the relation 
$$
(r-1)a b^{n+1}>(b-r)c_0
$$ 
Hence, by Lemma \ref{lem:p-frob}, we have 
\begin{align*}
&g_0(A_{3,n})\\
&=\begin{cases}
(a b^{n+1}+c_0)(r-1)+(a b^{n+2}+c_0)q-(a b^n+c_0)&\text{if $(r-1)a b^{n+1}\ge(b-r)c_0$};\\
(a b^{n+1}+c_0)b+(a b^{n+2}+c_0)(q-1)-(a b^n+c_0)&\text{otherwise}\,. 
\end{cases}
\end{align*}   
In ${\rm Ap}_1(A_{3,n})$, we compare the elements at $(r-1,q+1)$, at $(b,q)$, at $(b+r,q-1)$ and at $(2 b+1,q-2)$ under the condition $c_0>0$.  
Since 
\begin{align*}  
(r-1)a b^{n+1}>(b-r)c_0&\Leftrightarrow
\hbox{element at~}(r-1,q+1)>\hbox{element at~}(b,q)\\ 
&\Leftrightarrow \hbox{element at~}(b+r,q-1)>\hbox{element at~}(2 b+1,q-2)\,,\\ 
a b^{n+1}>c_0&\Leftrightarrow
\hbox{element at~}(r-1,q+1)>\hbox{element at~}(b+r,q-1)\\ 
&\Leftrightarrow  
\hbox{element at~}(b,q)>\hbox{element at~}(2 b+1,q-2)\,,
\end{align*}
we have the following.  
\begin{align*}
&g_1(A_{3,n})\\
&=\left\{
\begin{alignedat}{2}
& (a b^{n+1}+c_0)(r-1)+(a b^{n+2}+c_0)(q+1)-(a b^n+c_0)\\
&\qquad\qquad\qquad\qquad\text{if $(r-1)a b^{n+1}\ge c_0\max\{b-r,r-1\}$},\\
& (a b^{n+1}+c_0)b+(a b^{n+2}+c_0)q-(a b^n+c_0)\\
&\qquad\qquad\qquad\qquad\text{if $(b-r)a b^{n+1}\ge(b-r)c_0>(r-1)a b^{n+1}$},\\
& (a b^{n+1}+c_0)(b+r)+(a b^{n+2}+c_0)(q-1)-(a b^n+c_0)\\
&\qquad\qquad\qquad\qquad\text{if $(r-1)a b^{n+1}\ge(b-r)c_0>(b-r)a b^{n+1}$},\\
& (a b^{n+1}+c_0)(2 b+1)+(a b^{n+2}+c_0)(q-2)-(a b^n+c_0)\\
&\qquad\qquad\qquad\qquad\text{if $(b-r)c_0>a b^{n+1}\max\{b-r,r-1\}$}\,.
\end{alignedat}
\right. 
\end{align*}   
In ${\rm Ap}_2(A_{3,n})$, since 
\begin{align*}  
&(r-1)a b^{n+1}>(b-r)c_0\\
&\Leftrightarrow \hbox{element at~}(r-1,q+2)>\hbox{element at~}(b,q+1)\\ 
&\Leftrightarrow \hbox{element at~}(b+r,q)>\hbox{element at~}(2 b+1,q-1)\\ 
&\Leftrightarrow \hbox{element at~}(2 b+r+1,q-2)>\hbox{element at~}(3 b+2,q-3)\,,\\ 
&a b^{n+1}>c_0\\
&\Leftrightarrow \hbox{element at~}(r-1,q+2)>\hbox{element at~}(b+r,q)\\ 
&\qquad >\hbox{element at~}(2 b+r+1,q-2)\\ 
&\Leftrightarrow
\hbox{element at~}(b,q+1)>\hbox{element at~}(2 b+1,q-1)\\ 
&\qquad >\hbox{element at~}(3 b+2,q-3)\,, 
\end{align*}
we have the following. 
\begin{align*}
&g_2(A_{3,n})\\
&=\left\{
\begin{alignedat}{2}
& (a b^{n+1}+c_0)(r-1)+(a b^{n+2}+c_0)(q+2)-(a b^n+c_0)\\
&\qquad\qquad\qquad\qquad\text{if $(r-1)a b^{n+1}\ge c_0\max\{b-r,r-1\}$},\\
& (a b^{n+1}+c_0)b+(a b^{n+2}+c_0)(q+1)-(a b^n+c_0)\\
&\qquad\qquad\qquad\qquad\text{if $(b-r)a b^{n+1}\ge(b-r)c_0>(r-1)a b^{n+1}$},\\
& (a b^{n+1}+c_0)(2 b+r+1)+(a b^{n+2}+c_0)(q-2)-(a b^n+c_0)\\
&\qquad\qquad\qquad\qquad\text{if $(r-1)a b^{n+1}\ge(b-r)c_0>(b-r)a b^{n+1}$},\\
& (a b^{n+1}+c_0)(3 b+2)+(a b^{n+2}+c_0)(q-3)-(a b^n+c_0)\\
&\qquad\qquad\qquad\qquad\text{if $(b-r)c_0>a b^{n+1}\max\{b-r,r-1\}$}\,.
\end{alignedat}
\right. 
\end{align*}

In general, in ${\rm Ap}_p(A_{3,n})$ ($0\le p\le q$), we find that 
\begin{align*}  
&(r-1)a b^{n+1}>(b-r)c_0\\
&\Leftrightarrow \hbox{element at~}(r-1,q+p)>\hbox{element at~}(b,q+p-1)\\ 
&\Leftrightarrow \hbox{element at~}(b+r,q+p-2)>\hbox{element at~}(2 b+1,q+p-3)\\ 
&\Leftrightarrow \hbox{element at~}(2 b+r+1,q+p-4)>\hbox{element at~}(3 b+2,q+p-5)\\
&\Leftrightarrow \cdots\\
&\Leftrightarrow \hbox{element at~}((p-1)b+r+(p-2),q-p+2)>\hbox{element at~}(p b+p-1,q-p+1)\\ 
&\Leftrightarrow \hbox{element at~}(p b+r+p-1,q-p)>\hbox{element at~}((p+1)b+p,q-p-1)\,,\\ 
&a b^{n+1}>c_0\\
&\Leftrightarrow \hbox{element at~}(r-1,q+p)>\hbox{element at~}(b+r,q+p-2)\\ 
&\qquad >\hbox{element at~}(2 b+r+1,q+p-4)>\cdots\\ 
&\qquad >\hbox{element at~}((p-1)b+r+(p-2),q-p+2)\\
&\qquad >\hbox{element at~}(p b+r+p-1,q-p)\\ 
&\Leftrightarrow 
\hbox{element at~}(b,q+p-1)>\hbox{element at~}(2 b+1,q+p-3)\\ 
&\qquad >\hbox{element at~}(3 b+2,q+p-5)>\cdots\\
&\qquad >\hbox{element at~}(p b+p-1,q-p+1)\\
&\qquad >\hbox{element at~}((p+1)b+p,q-p-1)\,.
\end{align*}
Hence, we have the following.  

\begin{theorem} 
Suppose that $\gcd(A_{3,n})=1$, $c_0>0$ and $0\le p\le q$. 
Then 
\begin{align*}
&g_p(A_{3,n})\\
&=\left\{
\begin{alignedat}{2}
& (a b^{n+1}+c_0)(r-1)+(a b^{n+2}+c_0)(q+p)-(a b^n+c_0)\\
&\qquad\qquad\qquad\qquad\text{if $(r-1)a b^{n+1}\ge c_0\max\{b-r,r-1\}$},\\
& (a b^{n+1}+c_0)b+(a b^{n+2}+c_0)(q+p-1)-(a b^n+c_0)\\
&\qquad\qquad\qquad\qquad\text{if $(b-r)a b^{n+1}\ge(b-r)c_0>(r-1)a b^{n+1}$},\\
& (a b^{n+1}+c_0)(p b+r+p-1)+(a b^{n+2}+c_0)(q-p)-(a b^n+c_0)\\
&\qquad\qquad\qquad\qquad\text{if $(r-1)a b^{n+1}\ge(b-r)c_0>(b-r)a b^{n+1}$},\\
& (a b^{n+1}+c_0)((p+1)b+p)+(a b^{n+2}+c_0)(q-p-1)-(a b^n+c_0)\\
&\qquad\qquad\qquad\qquad\text{if $(b-r)c_0>a b^{n+1}\max\{b-r,r-1\}$}\,.
\end{alignedat}
\right. 
\end{align*} 
\label{th:c<0}
\end{theorem}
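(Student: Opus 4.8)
The plan is to follow exactly the structure already laid out in the $c>0$ analysis and in the preceding computations of $g_0,g_1,g_2$, since for $c<0$ with $r>1$ the combinatorial skeleton of Tables~\ref{tb:g0system}--\ref{tb:g012psystem} is unchanged; what changes is only which of the finitely many ``corner'' entries of ${\rm Ape}_p(A_{3,n})$ is maximal. First I would observe, via Lemma~\ref{lem:p-frob}, that it suffices to locate $\max_{0\le j\le a_1-1}m_j^{(p)}$, and that this maximum is attained at one of the candidate staircase positions
$$
(r-1,q+p),\ (b,q+p-1),\ (b+r,q+p-2),\ \dots,\ (pb+r+p-1,q-p),\ ((p+1)b+p,q-p-1),
$$
i.e.\ along the ``outer boundary'' of the region $\ctext{p}$ in Table~\ref{tb:g012psystem}: within any fixed row the rightmost entry is the largest (since $ab^{n+1}+c_0>0$), and along the column of rightmost entries one only needs to compare these boundary corners. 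So the whole problem reduces to a pairwise comparison among these $2p+2$ explicit values.

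Next I would carry out that comparison. Writing each candidate as $(ab^{n+1}+c_0)X_2+(ab^{n+2}+c_0)X_3$ and subtracting consecutive candidates, one checks that passing from a ``$(\cdots,q+p-2k)$''-type corner to the next ``$(\cdots,q+p-2k-1)$''-type corner multiplies $X_2$ by adding $b-r+1$ or $r$ alternately while decreasing $X_3$ by $1$; a direct subtraction shows the sign of each such difference is governed by exactly one of the two inequalities
$$
(r-1)ab^{n+1}\ \gtrless\ (b-r)c_0,\qquad ab^{n+1}\ \gtrless\ c_0,
$$
precisely as recorded in the displayed chains of equivalences just before the theorem. (These two are the specializations, obtained by clearing the common factors, of ``corner $A>$ corner $B$'' for the two relevant step types.) Since $b-r\ge 0$ and $r-1\ge 0$, the parameter $c_0(b-r)/(ab^{n+1})$ versus $r-1$, together with $c_0/(ab^{n+1})$ versus $1$, partition all cases into the four regimes of the statement, and in each regime the chain of equivalences pins down a unique maximal corner: $(r-1,q+p)$ when $(r-1)ab^{n+1}\ge c_0\max\{b-r,r-1\}$; $(b,q+p-1)$ when $(b-r)ab^{n+1}\ge(b-r)c_0>(r-1)ab^{n+1}$; $(pb+r+p-1,q-p)$ when $(r-1)ab^{n+1}\ge(b-r)c_0>(b-r)ab^{n+1}$; and $((p+1)b+p,q-p-1)$ in the remaining case. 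Plugging the winning corner into Lemma~\ref{lem:p-frob} and subtracting $a_1=ab^n+c_0$ yields the four formulas.

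Two bookkeeping points need care. One is that the range $0\le p\le q$ must guarantee that all the listed corner positions actually occur in ${\rm Ape}_p(A_{3,n})$ — in particular that $q-p-1\ge 0$ is \emph{not} required, since the corner $((p+1)b+p,q-p-1)$ with $q-p-1=-1$ would be meaningless; here one argues, exactly as in the $c>0$ case where ``$(-1,b+x_2,x_3)$ does not make sense,'' that when $p=q$ the last regime degenerates and the relevant maximum is still among the earlier corners, so the stated four-way formula remains correct for $0\le p\le q$ (the boundary case $p=q$ can be checked directly against the $c>0$ template). The other is verifying that no \emph{interior} (non-boundary) element of $\ctext{p}$ beats all boundary corners; this follows because increasing $x_3$ by $1$ while keeping $x_2$ fixed strictly increases the value (coefficient $ab^{n+2}+c_0>0$), so within each ``column'' the top element dominates, and the top elements of all columns are exactly the enumerated corners.

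The main obstacle I anticipate is not any single hard estimate but the combinatorial case-management: one must be confident that the listed $2p+2$ corners genuinely exhaust the candidates for the maximum across \emph{all} $p$ simultaneously, and that the two scalar inequalities really do control every adjacent comparison in the staircase (including the ``turn'' corners where the step type alternates between $+(b-r+1)$ and $+r$ in the $x_2$-coordinate). Making the equivalence chains rigorous — rather than merely plausible from the $p=0,1,2$ cases — is the crux; once those chains are established, the four formulas drop out mechanically from Lemma~\ref{lem:p-frob}.
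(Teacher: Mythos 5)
Your route is the paper's route: reduce to the largest element of ${\rm Ape}_p(A_{3,n})$ via Lemma \ref{lem:p-frob}, observe that for $c<0$ the staircase of Table \ref{tb:g012psystem} is unchanged, restrict to the $2p+2$ boundary corners $A_k=(k(b+1)+r-1,\,q+p-2k)$ and $B_k=((k+1)(b+1)-1,\,q+p-2k-1)$ for $0\le k\le p$, and decide among them by two scalar criteria. The gap is in the mechanism you offer for the comparisons. You assert that \emph{every adjacent} comparison along the boundary is governed by one of the two inequalities $(r-1)ab^{n+1}\gtrless(b-r)c_0$ and $ab^{n+1}\gtrless c_0$, the two step types being ``add $b-r+1$ to $x_2$'' and ``add $r$ to $x_2$'' (each with $x_3$ dropping by $1$). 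The first type indeed gives $(ab^{n+2}+c_0)-(b+1-r)(ab^{n+1}+c_0)=(r-1)ab^{n+1}-(b-r)c_0$, but the second type gives $(ab^{n+2}+c_0)-r(ab^{n+1}+c_0)=(b-r)ab^{n+1}-(r-1)c_0$, a third, independent comparison that does \emph{not} reduce to either of your two inequalities by ``clearing common factors''. Its sign is genuinely undetermined in your regimes: for instance in the first regime with $r=b$ one has $(b-r)ab^{n+1}-(r-1)c_0=-(b-1)c_0<0$, so the boundary sequence $A_0,B_0,A_1,B_1,\dots$ is not monotone there, and adjacent differences alone cannot pin down the maximal corner. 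So the step ``the chain of equivalences pins down a unique maximal corner'' is exactly what your argument does not establish; you lean on ``the displayed chains of equivalences'' without deriving them, and your stated derivation of them is incorrect.

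The repair is the comparison the paper actually makes: compare corners of the \emph{same} type two steps apart. Since $A_k-A_{k+1}=B_k-B_{k+1}=2(ab^{n+2}+c_0)-(b+1)(ab^{n+1}+c_0)=(b-1)(ab^{n+1}-c_0)$, each of the two chains $A_0,A_1,\dots,A_p$ and $B_0,B_1,\dots,B_p$ is monotone with direction given by $ab^{n+1}\gtrless c_0$, while $A_k$ versus $B_k$ is governed by $(r-1)ab^{n+1}\gtrless(b-r)c_0$ uniformly in $k$; hence the maximum is $A_0$, $B_0$, $A_p$ or $B_p$ in the four regimes, and Lemma \ref{lem:p-frob} yields the four formulas. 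This is a short fix, but it is the missing idea, not a formality. A secondary loose end: for $p=q$ the corner $B_p=((p+1)b+p,\,q-p-1)$ has $x_3=-1$ and does not exist, so the fourth-regime formula needs a separate justification there; your remark that ``the last regime degenerates'' is not yet an argument (the paper is silent on this boundary case as well).
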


\section{Example}  

When $a=5$, $b=2$ and $c=19$, we see that $q=7$ and $r=0$. We know that 
$\gcd(5\cdot 2^3-19,5\cdot 2^4-19,5\cdot 2^5-19)=\gcd(21,61,141)=1$.  
Then, by Theorem \ref{th:p-frob}, for $0\le p\le 7$, we have 
\begin{align*} 
g_p(21,61,141)&=2\cdot 61+(7+p-1)\cdot 141-21\\
&=141 p+947\,. 
\end{align*} 
By Theorem \ref{th:p-sylv}, for $0\le p\le 7$, we have 
\begin{align*} 
n_p(21,61,141)&=\frac{1}{2}\bigl((21-1)(61-1)-2\cdot 7(2\cdot 21-(2+1)(7+1))\\
&\qquad +(2+1)p(2\cdot 61-2(p+1))\bigr)\\
&=3(158+60 p-p^2)\,.
\end{align*} 

Set $a=b+1$, $c=1$ and $n=1$. Since 
\begin{align*}
(b+1)b^4-1&=(b^3-b-1)\bigl((b+1)b-1\bigr)+2\bigl((b+1)b^2-1\bigr)\,,\\ 
(b+1)b^5-1&=(b^4-b^3-2 b^2+1)\bigl((b+1)b-1\bigr)+(b^2+b-2)\bigl((b+1)b^2-1\bigr)\\
&\qquad +2\bigl((b+1)b^3-1\bigr)\,,\\ 
\dots\,,&
\end{align*}
we get 
$$
(b+1)b^4-1,(b+1)b^5-1,\dots\in\ang{(b+1)b-1, (b+1)b^2-1, (b+1)b^3-1}\,.
$$ 
Hence, 
our results (finite sequence) in Theorem \ref{th:p-frob} and Theorem \ref{th:p-sylv} for $p=0$ reduce to those of Thabit numerical semigroups of the first kind in base $b$ (infinite sequence) in \cite[Theorem 5.5, Theorem 5.13]{Song20}. However, when $n=2$, since  
\begin{align*}
&(b+1)b^5-1\not\in\ang{(b+1)b^2-1, (b+1)b^3-1,(b+1)b^4-1}\,,\\
&(b+1)b^6-1,(b+1)b^7-1,\dots\in\\
&\qquad\qquad \ang{(b+1)b^2-1, (b+1)b^3-1,(b+1)b^4-1,(b+1)b^5-1}\,,
\end{align*}
our results and those in \cite{Song20} may be different. We do need the results for four variables to extend the results in \cite{Song20}. 

Set $a=b+1$ and $c=-1$, which reduces the Thabit numbers of the second kind in base $b$ in \cite{Song20}. 
When $n=1$, by  
$$
(b+1)b^4+1,(b+1)b^5+1,\dots\in\ang{(b+1)b+1, (b+1)b^2+1, (b+1)b^3+1}\,,
$$ 
the infinite sequence is reduced to the three variables' finite sequence.  However, when $n=2$, the infinite sequence is reduced to four variables. 
If $b=3$, we see that $q=3^n$ and $r=1$. Then, by the second condition in Theorem \ref{th:c<0}, we have 
\begin{align*}
g_p(A_{3,n})&=\bigl(4\cdot 3^{n+1}+1\bigr)\cdot 3+\bigl(4\cdot 3^{n+2}+1\bigr)(3^n+p-1)-\bigl(4\cdot 3^n+1\bigr)\\
&=(4\cdot 3^{n+2}+1)p+4\cdot 3^{2 n+2}-3^{n+1}+1\,. 
\end{align*} 
In particular, when $n=1$, we get 
$$
g_0(A_{3,1})=109 p+316\,. 
$$

\section{Four variables}  

Since it is important to consider general $p$ representations for an infinite sequence in the numerical semigroup, it is natural to extend the argument about $3$ variables to the case of a finite sequence such as $4$ variables, $5$ variables and so on. However, geometrical interpretation, even in the case of four variables, is considerably more complicated than in the case of three variables. One of the reasons is that the elements of the Ap\'ery sets ${\rm Ap}_p(A)$ and ${\rm Ap}_{p+1}(A)$ overlap. Here, only a sketch shall be described. 

For given positive integers $a$, $b$ and $c$ with $b\ge 2$, nonnegative integers $\alpha$, $\beta$ and $\gamma$ are determined as 
\begin{align}
&a b^n-c=\alpha(b^2+b+1)+\beta(b+1)+\gamma,\notag\\ 
&0\le\beta(b+1)+\gamma\le b^2+b,\quad 0\le\gamma\le b\,. 
\label{abg}
\end{align}
That is, 
$$
\alpha=\fl{\frac{a b^n-c}{b^2+b+1}}\quad\hbox{and}\quad\beta=\frac{a b^n-c-\beta(b^2+b+1)}{b+1}\,. 
$$ 
Then the elements in ${\rm Ap}_0(A_{4,n})$, where $A_{4,n}=\{a b^n-c, a b^{n+1}-c, a b^{n+2}-c, a b^{n+3}-c\}$ are distributed into $\alpha$ blocks consisting of $b^2+b+1$ elements and one incomplete block. The incomplete block consists of $\beta$ rows of $b+1$ elements and one row of incomplete $\gamma$ elements.  
For simplicity, write $(x_2,x_3,x_4)$ to represent the corresponding element $x_2(a b^{n+1}-c)+x_3(a b^{n+2}-c)+x_4(a b^{n+3}-c)$. Note that the leftmost column of the upper block and the rightmost column of the lower block are in one column, and the elements are duplicated in one column of the lower block. For example, one element can be expressed in two ways: $(0,b+j+1,0)$ and $(b,j,1)$ ($0\le j\le b-1$). 
By comparing the possible candidates to take the maximal value, we find that the element at $(\gamma-1,\beta,\alpha)$ takes the largest in ${\rm Ap}_0(A_{4,n})$ when $\gamma\ge 1$. When $\gamma=0$, the element at $(b,\beta-1,\alpha)$ or $(0,b+\beta,\gamma-1)$ (overlapped same value) takes the largest.

\begin{table}[htbp]
  \centering
\scalebox{0.5}{
\begin{tabular}{ccccccccccccc}
\cline{11-12}\cline{13-13}
&&&&&&&&&&\multicolumn{1}{|c}{$(0,0,0)$}&$\cdots$&\multicolumn{1}{c|}{$(b,0,0)$}\\
&&&&&&&&&&\multicolumn{1}{|c}{$\vdots$}&&\multicolumn{1}{c|}{$\vdots$}\\
&&&&&&&&&&\multicolumn{1}{|c}{$(0,b-1,0)$}&$\cdots$&\multicolumn{1}{c|}{$(b,b-1,0)$}\\
\cline{12-13}
&&&&&&&&&&\multicolumn{1}{|c|}{$(0,b,0)$}&&\\
\cline{9-10}\cline{11-11}
&&&&&&&&\multicolumn{1}{|c}{$(0,0,1)$}&$\cdots$&\multicolumn{1}{c|}{$(0,b+1,0)\atop(b,0,1)$}&&\\
&&&&&&&&\multicolumn{1}{|c}{$\vdots$}&&\multicolumn{1}{c|}{$\vdots$}&&\\
&&&&&&&&\multicolumn{1}{|c}{$(0,b-1,1)$}&$\cdots$&\multicolumn{1}{c|}{$(0,2 b,0)\atop(b,b-1,1)$}&&\\
\cline{10-11}
&&&&&&&&\multicolumn{1}{|c|}{$(0,b,1)$}&&&&\\
\cline{7-8}\cline{9-9}
&&&&&&\multicolumn{1}{|c}{$(0,0,2)$}&$\cdots$&\multicolumn{1}{c|}{$(0,b+1,1)\atop(b,0,2)$}&&&&\\
&&&&&&\multicolumn{1}{|c}{$\vdots$}&&\multicolumn{1}{c|}{$\vdots$}&&&&\\
&&&&&&\multicolumn{1}{|c}{$(0,b-1,2)$}&$\cdots$&\multicolumn{1}{c|}{$(0,2 b,1)\atop(b,b-1,2)$}&&&&\\
\cline{8-9}
&&&&&&\multicolumn{1}{|c|}{$(0,b,2)$}&&&&&&\\
\cline{7-7}
&&&&&$\iddots$&&&&&&&\\
\cline{3-4}\cline{5-5}
&&\multicolumn{1}{|c}{$(0,0,\alpha-1)$}&$\cdots$&\multicolumn{1}{c|}{$(0,b+1,\alpha-2)\atop(b,0,\alpha-1)$}&&&&&&&&\\
&&\multicolumn{1}{|c}{$\vdots$}&&\multicolumn{1}{c|}{$\vdots$}&&&&&&&&\\
&&\multicolumn{1}{|c}{$(0,b-1,\alpha-1)$}&$\cdots$&\multicolumn{1}{c|}{$(0,2 b,\alpha-2)\atop(b,b-1,\alpha-1)$}&&&&&&&&\\
\cline{4-5}
&&\multicolumn{1}{|c|}{$(0,b,\alpha-1)$}&&&&&&&&&&\\
\cline{1-2}\cline{3-3}
\multicolumn{1}{|c}{$(0,0,\alpha)$}&$\cdots$&\multicolumn{1}{c|}{$(0,b+1,\alpha-1)\atop(b,0,\alpha)$}&&&&&&&&&&\\
\multicolumn{1}{|c}{$\vdots$}&&\multicolumn{1}{c|}{$\vdots$}&&&&&&&&&&\\
\multicolumn{1}{|c}{$(0,\beta-1,\alpha)$}&$\cdots$&\multicolumn{1}{c|}{$(0,b+\beta,\alpha-1)\atop(b,\beta-1,\alpha)$}&&&&&&&&&&\\
\cline{3-3}
\multicolumn{1}{|c}{$(0,\beta,\alpha)\cdots$}&\multicolumn{1}{c|}{$(\gamma-1,\beta,\alpha)$}&&&&&&&&&&&\\
\cline{1-2}
\end{tabular}
} 
  \caption{Complete residue system ${\rm Ap}_0(A_{4,n})$}
  \label{tb:g4_0system}
\end{table}

The element of ${\rm Ap}_1(A_{4,n})$ when $p=1$ is continuously arranged so as to fill the gap of the upper right block with respect to the element of the same residue modulo $a b^n-c$ of ${\rm Ap}_0(A_{4,n})$. Only the elements of the top block will generate a new block just to the right of it, going up one line. However, only the elements in the top row are placed in succession to fill the gaps between the elements in the bottom left incomplete block. This is similar to that in the case of three variables. 

The decisive difference from the case of three variables is that the overlapping element part does not move this time but moves at the next $p+1$, in this case when $p=1$. Also, when the elements coming from the top row fill the incomplete block at the bottom, that is, when the number of elements reaches $b^2+b+1$, then it starts to be arranged so as to form a new block at the lower left. 

The maximal value in ${\rm Ap}_1(A_{4,n})$ comes just below that in ${\rm Ap}_0(A_{4,n})$.   

When $p=2,3,\dots$, the similar procedure is continued. However, as there are only $b$ rows except the first column (where is $b+1$ rows) in each block, the regularity of the position to take the maximal value breaks down for $p>b-\beta$.

\begin{theorem}  
Let $\alpha$, $\beta$ and $\gamma$ be nonnegative integers satisfying the condition (\ref{abg}). Then for $0\le p\le b-\beta$ 
\begin{align*}
&g_p(A_{4,n})\\ 
&=\left\{
\begin{alignedat}{2}
& (\gamma-1)(a b^{n+1}-c)+(\beta+p)(a b^{n+2}-c)+\alpha(a b^{n+3}-c)-(a b^{n}-c)\\
&\qquad\qquad\qquad\qquad\qquad\text{if $\gamma\ge 1$},\\
& (b+\beta+p)(a b^{n+2}-c)+(\alpha-1)(a b^{n+3}-c)-(a b^{n}-c)\\
&\qquad\qquad\qquad\qquad\qquad\text{if $\gamma=0$}\,.
\end{alignedat}
\right. 
\end{align*} 
\label{th:4variable}
\end{theorem}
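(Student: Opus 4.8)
The plan is to follow the same Apéry-set strategy that worked in the three-variable case, but now keeping careful track of the overlapping column. First I would fix the residue bookkeeping: since $a b^{n+1}-c\equiv(b-1)c\pmod{a b^n-c}$, $a b^{n+2}-c\equiv(b^2-1)c$ and $a b^{n+3}-c\equiv(b^3-1)c\pmod{a b^n-c}$, the element coded $(x_2,x_3,x_4)$ has a well-defined residue, and I would record the two basic ``moves'' that preserve residue mod $a b^n-c$: the relation $(b+1)(a b^{n+1}-c)=b(a b^n-c)+(a b^{n+2}-c)$ already used in \eqref{eq:435}, and its one-degree-up analogue $(b+1)(a b^{n+2}-c)=b(a b^n-c)+(a b^{n+3}-c)$. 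These give the identifications $(0,b+j+1,x_4)=(b,j,x_4+1)$ that produce the overlapping column in Table \ref{tb:g4_0system}, and the ``carry'' $(x_2,x_3,x_4)\Longrightarrow(x_2+b+1,x_3-1,x_4)$ and $(x_2,x_3,x_4)\Longrightarrow(x_2,x_3+b+1,x_4-1)$ that move an element one block to the right. Together with the definition \eqref{abg} of $\alpha,\beta,\gamma$, these identities let me describe exactly which codes $(x_2,x_3,x_4)$ occur in ${\rm Ape}_p(A_{4,n})$ for each $p$ in the stated range, namely the ``staircase'' layout sketched in the text.

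The core of the argument is then to verify two things for $0\le p\le b-\beta$. \emph{(a)} The proposed witness is actually in ${\rm R}_{p+1}(A_{4,n})$, i.e. it has exactly $p+1$ representations and each element minus $(a b^n-c)$ has exactly $p$: this is the ``number of solutions'' content, and it follows by exhibiting the $p+1$ representations explicitly via repeated application of the two carry identities (exactly as in the displayed chain for ${\rm Ape}_q(A_{3,n})$ in the three-variable section), then checking no further representation exists because subtracting $(a b^n-c)$ from the smallest coordinate would make it negative. \emph{(b)} Among all codes appearing in ${\rm Ape}_p(A_{4,n})$, the one claimed is the largest. Since $a b^{n+3}-c>a b^{n+2}-c>a b^{n+1}-c>0$, increasing $x_4$ at the expense of $x_2$ or $x_3$ increases the value; so the maximum is sought in the ``tallest'' available column, which by the staircase description is the top row $(\,\cdot\,,\beta+p,\alpha)$ if $\gamma\ge1$ — giving $(\gamma-1,\beta+p,\alpha)$ — and the row $(\,\cdot\,,b+\beta+p,\alpha-1)$ if $\gamma=0$ (the overlapping value $(0,b+\beta+p,\alpha-1)=(b,\beta+p-1,\alpha)$, which is why the two $\gamma=0$ expressions in the text coincide). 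The constraint $p\le b-\beta$ is exactly what guarantees $\beta+p\le b$, so that this column still has its full height $\alpha$ (resp. $\alpha-1$) and the regular pattern has not yet broken; I would isolate this inequality explicitly as the reason the formula stops there. Finally, apply Lemma \ref{lem:p-frob} to subtract $a b^n-c$ and read off $g_p$.

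The main obstacle I expect is step \emph{(b)}, the maximality comparison: unlike the three-variable case, the competitors are numerous (one per ``shelf'' of the staircase, and additionally the overlapping column complicates which codes are genuinely present), so I would need a clean lemma saying that along the staircase the value is monotone — precisely, that moving from shelf to shelf the gain in the $(a b^{n+3}-c)$-coordinate always dominates the loss in the lower coordinates, which reduces to an inequality of the shape $(a b^{n+3}-c)-(b+1)(a b^{n+2}-c)+b(a b^{n+1}-c)$ being of the right sign, i.e. essentially $c>0$ again (the paper restricts to $c>0$ here). A secondary nuisance is making the representation-counting in step \emph{(a)} fully rigorous despite the overlaps between ${\rm Ape}_p$ and ${\rm Ape}_{p+1}$ flagged in the text; I would handle this by counting representations of each witness directly rather than by the ``shift one row up'' heuristic, so that the overlap phenomenon never enters the proof of the final formula, only its motivation.
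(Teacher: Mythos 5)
Your overall route---lay out ${\rm Ape}_p(A_{4,n})$ as a staircase of blocks, locate its maximal element, and apply Lemma \ref{lem:p-frob}---is the same as the paper's (which itself only sketches this theorem), but the identities you build it on are wrong, and they are load-bearing. The one-degree-up analogue of (\ref{eq:435}) is $(b+1)(ab^{n+2}-c)=b(ab^{n+1}-c)+(ab^{n+3}-c)$, i.e.\ (\ref{eq:435}) with $n$ replaced by $n+1$; your version $(b+1)(ab^{n+2}-c)=b(ab^{n}-c)+(ab^{n+3}-c)$ is false, the two sides differing by $(b-1)ab^{n+1}$. It is the correct identity that yields the overlap $(0,b+j+1,x_4)=(b,j,x_4+1)$, and your second carry $(x_2,x_3,x_4)\Longrightarrow(x_2,x_3+b+1,x_4-1)$ does \emph{not} preserve the residue modulo $ab^{n}-c$: it changes the value by $b(ab^{n+1}-c)$, which is not a multiple of $ab^{n}-c$. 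The residue-preserving trade is $(x_2,x_3,x_4)\Longrightarrow(x_2+1,x_3+b,x_4-1)$ (the value grows by $b(ab^{n}-c)$), equivalently the value-preserving $(x_2,x_3,x_4)\Longrightarrow(x_2-b,x_3+b+1,x_4-1)$; with the wrong moves the claimed description of which $(x_2,x_3,x_4)$ lie in ${\rm Ape}_p(A_{4,n})$ cannot be established. Likewise your proposed monotonicity quantity $(ab^{n+3}-c)-(b+1)(ab^{n+2}-c)+b(ab^{n+1}-c)$ is identically zero, so it has no sign and decides nothing; the comparisons that actually single out the maximum are of the type $(ab^{n+2}-c)-b(ab^{n+1}-c)=(b-1)c>0$ and $(ab^{n+3}-c)-b(ab^{n+2}-c)=(b-1)c>0$, the four-variable analogue of (\ref{eq:ape0,c>0}).

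Step \emph{(a)} is also unsound as described. When $\gamma=0$ the claimed maximum is precisely an overlapped cell, $(0,b+\beta+p,\alpha-1)=(b,\beta+p-1,\alpha)$, and it has more than $p+1$ representations: in the paper's example $(17,125,449,1421)$ the $p=0$ witness $4\cdot 449=1796$ already has the two representations $(0,0,4,0)$ and $(0,3,0,1)$, so ``exactly $p+1$ ways'' is false; what must be shown is at least $p+1$ representations for the witness and at most $p$ for the witness minus $ab^{n}-c$. Moreover the required representations are not produced by repeated carries, so neither your construction nor your non-existence check works: for $(9,19,39,79)$ (that is $a=5$, $b=2$, $c=1$, $n=1$, so $\alpha=1$, $\beta=0$, $\gamma=2$) the $p=1$ witness $19+39+79=137$ has the second representation $11\cdot 9+2\cdot 19$, whose difference from $(0,1,1,1)$ is not an integer combination of the two trading relations at all; such representations with large $x_1$ are exactly what place the element in ${\rm Ape}_1$, and they cannot be detected by ``subtracting $ab^{n}-c$ from the smallest coordinate''. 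Finally, you cannot quarantine the overlap phenomenon to motivation: Lemma \ref{lem:p-frob} takes a maximum over all $ab^{n}-c$ residue classes, so you need the position of \emph{every} element of ${\rm Ape}_p(A_{4,n})$, and it is precisely the overlapped column (cells lying in ${\rm Ape}_p$ and ${\rm Ape}_{p+1}$ simultaneously) that changes the layout relative to three variables and forces the restriction $p\le b-\beta$; a complete proof has to track it, as the paper's sketch does.
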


We illustrate the case where $a=2$, $b=3$, $c=37$ and $n=3$. Note that $\gcd(2\cdot 3^3-37, 2\cdot 3^4-37, 2\cdot 3^5-37, 2\cdot 3^6-37)=\gcd(17,125,449,1421)=1$. Since $17=1\cdot(3^2+3+1)+1\cdot(3+1)+0$, we know that $\alpha=\beta=1$ and $\gamma=0$. The values of elements $j\pmod{17}$ appear in the table. We see that $c'=(b-1)c=74\equiv 6\pmod{17}$. 
In Table, $\ctext{j}$ ($j=0,1,2$) shows the range in which the elements in ${\rm Ap}_j(A_{4,n})$ exist. 

\begin{table}[htbp]
  \centering
\scalebox{0.4}{
\begin{tabular}{ccccccccccccccccccc}
\cline{7-8}\cline{9-10}\cline{11-12}\cline{13-14}\cline{15-16}\cline{17-18}\cline{19-19}
&&&&&&\multicolumn{1}{|c}{$(0,0,0)$}&$(1,0,0)$&$(2,0,0)$&\multicolumn{1}{c|}{$(3,0,0)$}&$(4,0,0)$&$(5,0,0)$&$(6,0,0)$&\multicolumn{1}{c|}{$(7,0,0)$}&$(8,0,0)$&$(9,0,0)$&$(10,0,0)$&\multicolumn{1}{c|}{$(11,0,0)$}&\multicolumn{1}{c|}{$(12,0,0)$}\\
\cline{16-17}\cline{18-19}
&&&&&&\multicolumn{1}{|c}{$(0,1,0)$}&$(1,1,0)$&$(2,1,0)$&\multicolumn{1}{c|}{$(3,1,0)$}&$(4,1,0)$&$(5,1,0)$&$(6,1,0)$&\multicolumn{1}{c|}{$(7,1,0)$}&\multicolumn{1}{c|}{$(8,1,0)$}&$(9,1,0)$&$(10,1,0)$&\multicolumn{1}{c|}{$(11,1,0)$}&\\
\cline{12-12}\cline{13-14}\cline{15-16}\cline{17-18}
&&&&&&\multicolumn{1}{|c}{$(0,2,0)$}&$(1,2,0)$&$(2,2,0)$&\multicolumn{1}{c|}{$(3,2,0)$}&\multicolumn{1}{c|}{$(4,2,0)$}&$(5,2,0)$&$(6,2,0)$&\multicolumn{1}{c|}{$(7,2,0)$}&\multicolumn{1}{c|}{$(8,2,0)$}&&&&\\
\cline{8-9}\cline{10-11}\cline{12-13}\cline{14-15}
&&&&&&\multicolumn{1}{|c|}{$(0,3,0)$}&$(1,3,0)$&$(2,3,0)$&\multicolumn{1}{c|}{$(3,3,0)$}&\multicolumn{1}{c|}{$(4,3,0)$}&&&&&&&&\\
\cline{4-5}\cline{6-7}\cline{8-9}\cline{10-11}
&&&\multicolumn{1}{|c}{$(0,0,1)$}&$(1,0,1)$&$(2,0,1)$&\multicolumn{1}{|c|}{$(3,0,1)\atop(0,4,0)$}&$(4,0,1)\atop(1,4,0)$&$(5,0,1)\atop(2,4,0)$&\multicolumn{1}{c|}{$(6,0,1)\atop(3,4,0)$}&\multicolumn{1}{c|}{$(7,0,1)\atop(4,4,0)$}&&&&&&&&\\
\cline{4-5}\cline{6-7}\cline{8-9}\cline{10-11}
&&&\multicolumn{1}{|c}{$(0,1,1)$}&$(1,1,1)$&$(2,1,1)$&\multicolumn{1}{|c|}{$(3,1,1)\atop(0,5,0)$}&$(4,1,1)\atop(1,5,0)$&$(5,1,1)\atop(2,5,0)$&\multicolumn{1}{c|}{$(6,1,1)\atop(3,5,0)$}&&&&&&&&&\\
\cline{4-5}\cline{6-7}\cline{8-9}\cline{10-10}
&&&\multicolumn{1}{|c}{$(0,2,1)$}&$(1,2,1)$&$(2,2,1)$&\multicolumn{1}{|c|}{$(3,2,1)\atop(0,6,0)$}&&&&&&&&&&&&\\
\cline{4-5}\cline{6-7}
&&&\multicolumn{1}{|c|}{$(0,3,1)$}&&&&&&&&&&&&&&&\\
\cline{1-2}\cline{3-4}
\multicolumn{1}{|c}{$(0,0,2)$}&$(1,0,2)$&\multicolumn{1}{c|}{$(2,0,2)$}&&&&&&&&&&&&&&&&\\
\cline{1-2}\cline{3-3}
\end{tabular}
}  
\vskip1cm 
\scalebox{0.6}{
\begin{tabular}{ccccccccccccccccccc}
\cline{7-8}\cline{9-10}\cline{11-12}\cline{13-14}\cline{15-16}\cline{17-18}\cline{19-19}
&&&&&&\multicolumn{1}{|c}{$0$}&$6 c'$&$12 c'$&\multicolumn{1}{c|}{$c'$}&$7 c'$&$13 c'$&$2 c'$&\multicolumn{1}{c|}{$8 c'$}&$14 c'$&$3 c'$&$9 c'$&\multicolumn{1}{c|}{$15 c'$}&\multicolumn{1}{c|}{$4 c'$}\\
\cline{16-17}\cline{18-19}
&&&&&&\multicolumn{1}{|c}{$7 c'$}&$13 c'$&$2 c'$&\multicolumn{1}{c|}{$8 c'$}&$14 c'$&$3 c'$&$9 c'$&\multicolumn{1}{c|}{$15 c'$}&\multicolumn{1}{c|}{$4 c'$}&$10 c'$&$16 c'$&\multicolumn{1}{c|}{$5 c'$}&\\
\cline{12-12}\cline{13-14}\cline{15-16}\cline{17-18}
&&&&&&\multicolumn{1}{|c}{$14 c'$}&$3 c'$&$9 c'$&\multicolumn{1}{c|}{$15 c'$}&\multicolumn{1}{c|}{$4 c'$}&$10 c'$&$16 c'$&\multicolumn{1}{c|}{$5 c'$}&\multicolumn{1}{c|}{$11 c'$}&&&&\\
\cline{8-9}\cline{10-11}\cline{12-13}\cline{14-15}
&&&&&&\multicolumn{1}{|c|}{$4 c'$}&$10 c'$&$16 c'$&\multicolumn{1}{c|}{$5 c'$}&\multicolumn{1}{c|}{$11 c'$}&&&&&&&&\\
\cline{4-5}\cline{6-7}\cline{8-9}\cline{10-11}
&&&\multicolumn{1}{|c}{$10 c'$}&$16 c'$&$5 c'$&\multicolumn{1}{|c|}{$11 c'$}&$0$&$6 c'$&\multicolumn{1}{c|}{$12 c'$}&\multicolumn{1}{c|}{$c'$}&&&&&&&&\\
\cline{4-5}\cline{6-7}\cline{8-9}\cline{10-11}
&&&\multicolumn{1}{|c}{$0$}&$6 c'$&$12 c'$&\multicolumn{1}{|c|}{$c'$}&$7 c'$&$13 c'$&\multicolumn{1}{c|}{$2 c'$}&&&&&&&&&\\
\cline{4-5}\cline{6-7}\cline{8-9}\cline{10-10}
&&&\multicolumn{1}{|c}{$7 c'$}&$13 c'$&$2 c'$&\multicolumn{1}{|c|}{$8 c'$}&&&&&&&&&&&&\\
\cline{4-5}\cline{6-7}
&&&\multicolumn{1}{|c|}{$14 c'$}&&&&&&&&&&&&&&&\\
\cline{1-2}\cline{3-4}
\multicolumn{1}{|c}{$3 c'$}&$9 c'$&\multicolumn{1}{c|}{$15 c'$}&&&&&&&&&&&&&&&&\\
\cline{1-2}\cline{3-3}
\end{tabular}
}  
\vskip1cm 
\scalebox{0.65}{
\begin{tabular}{ccccccccccccccccccc}
\cline{7-8}\cline{9-10}\cline{11-12}\cline{13-14}\cline{15-16}\cline{17-18}\cline{19-19}
&&&&&&\multicolumn{1}{|c}{}&&&\multicolumn{1}{c|}{}&&&&\multicolumn{1}{c|}{}&&$\ctext{2}$&&\multicolumn{1}{c|}{}&\multicolumn{1}{c|}{$\ctext{3}$}\\
\cline{16-17}\cline{18-19}
&&&&&&\multicolumn{1}{|c}{}&$\ctext{0}$&&\multicolumn{1}{c|}{}&&$\ctext{1}$&&\multicolumn{1}{c|}{}&\multicolumn{1}{c|}{}&&$\ctext{3}$&\multicolumn{1}{c|}{}&\\
\cline{12-12}\cline{13-14}\cline{15-16}\cline{17-18}
&&&&&&\multicolumn{1}{|c}{}&&&\multicolumn{1}{c|}{}&\multicolumn{1}{c|}{}&&$\ctext{2}$&\multicolumn{1}{c|}{}&\multicolumn{1}{c|}{$\ctext{3}$}&&&&\\
\cline{8-9}\cline{10-11}\cline{12-13}\cline{14-15}
&&&&&&\multicolumn{1}{|c|}{}&&$\ctext{1}$&\multicolumn{1}{c|}{}&\multicolumn{1}{c|}{$\ctext{2}$}&&&&&&&&\\
\cline{4-5}\cline{6-7}\cline{8-9}\cline{10-11}
&&&\multicolumn{1}{|c}{}&$\ctext{0}$&&\multicolumn{1}{|c|}{$\ctext{0}$$\ctext{1}$}&&$\ctext{2}$$\ctext{3}$&\multicolumn{1}{c|}{}&\multicolumn{1}{c|}{$\ctext{3}$$\ctext{4}$}&&&&&&&&\\
\cline{4-5}\cline{6-7}\cline{8-9}\cline{10-11}
&&&\multicolumn{1}{|c}{}&$\ctext{1}$&&\multicolumn{1}{|c|}{$\ctext{1}$$\ctext{2}$}&&$\ctext{3}$$\ctext{4}$&\multicolumn{1}{c|}{}&&&&&&&&&\\
\cline{4-5}\cline{6-7}\cline{8-9}\cline{10-10}
&&&\multicolumn{1}{|c}{}&$\ctext{2}$&&\multicolumn{1}{|c|}{$\ctext{2}$$\ctext{3}$}&&&&&&&&&&&&\\
\cline{4-5}\cline{6-7}
&&&\multicolumn{1}{|c|}{$\ctext{3}$}&&&&&&&&&&&&&&&\\
\cline{1-2}\cline{3-4}
\multicolumn{1}{|c}{}&$\ctext{3}$&\multicolumn{1}{c|}{}&&&&&&&&&&&&&&&&\\
\cline{1-2}\cline{3-3}
\end{tabular}
} 
  \caption{${\rm Ap}_p(17,125,449,1421)$ ($p=0,1,2,3$)}
  \label{tb:g4_17-125-449-1421}
\end{table}

By Theorem \ref{th:4variable}, we have for $0\le p\le 3-1$ 
\begin{align*}
g_p(A_{4,3})&=(3+1+p)\cdot 449+(1-1)\cdot 1421-17\\
&=449 p+1779\,.  
\end{align*}  

However, when $p=3$, no element comes to the position at $(3,3,1)$ (or $(0,7,0)$). The maximal element is at $(2,0,2)$.


\end{document}